\newtheorem{theorem}{Theorem}[section]
\newtheorem{corollary}[theorem]{Corollary}
\newtheorem{lemma}[theorem]{Lemma}
\newtheorem{remark}[theorem]{Remark}
\newtheorem{proposition}[theorem]{Proposition}
\newtheorem{question}[theorem]{Question}
\newtheorem{claim}[theorem]{Claim}
\theoremstyle{definition}
\newtheorem{definition}[theorem]{Definition}
\newcommand{\restrict}{\mathord{\upharpoonright}}
\begin{document}

\title{Effective Localization number: building $k$-surviving degrees}
\author{Iv\'an Ongay-Valverde\\\emph{Department of Mathematics}\\\emph{University of Wisconsin--Madison}\\\emph{Email: ongay@math.wisc.edu}\vspace{.4cm}\\Noah Schweber\\\emph{Department of Mathematics}\\\emph{University of Wisconsin--Madison}\\\emph{Email: schweber@wisc.edu}
}

\date{\begin{tabular}{rl}First Draft:&May 15, 2017\\Current Draft:&\today\end{tabular}}
\maketitle
\begin{abstract} We introduce and study effective versions of the localization numbers introduced by Newelski and Roslanowski \cite{newelski1993ideal}. We show that proper hierarchies are produced, and that the corresponding highness notions are relatively weak, in that they can often be made computably traceable. We discuss connections with other better-understood effective cardinal characteristics.
\end{abstract}

\tableofcontents

\section{Introduction}

In this paper we continue the study of computability-theoretic cardinal characteristics of the continuum. Classically, a cardinal characteristic is a cardinal which measures how large a set of reals with a certain ``sufficiency" property must be: for example, the least size of a set of functions $\omega\rightarrow\omega$ such that every function $\omega\rightarrow\omega$ is dominated by some function in the set. It is possible that all reasonable cardinal characteristics are equal --- this would follow from the continuum hypothesis --- but a rich structure is revealed when we look at {\em consistent } separations: the  study of provable (weak) cardinal characteristic inequalities, of possible simultaneous separations of more than two characteristics at once, and of the interactions between cardinal characteristic inequalities and properties of forcing notions which preserve or induce them are important aspects of modern set theory.

As is often the case, the theory of cardinal characteristics has an ``effective" counterpart.  The explicit analogy was first drawn by Rupprecht \cite{rupprechtthesis},\footnote{An early effective analogue of a cardinal characteristic equality was provided by Terwijn and Zambella \cite{terwijn2001computational}, although they did not draw this connection explicitly.} who analyzed the characteristics occurring in Cichon's diagram, and was further studied by others including Brendle, Brooke-Taylor, Ng, and Nies \cite{bbtnn}. Given a relation $R\subseteq (\omega^\omega)^2$, a set-theoretic cardinal characteristic emerges when we ask when a real $s$ has the property that $cRs$ for every computable real $c$. Associated to this question is a corresponding ``highness" property, and Rupprecht showed that these highness properties are often of independent interest in computability theory. We think of this highness property as measuring the sense in which the relevant sufficiency property is hard to achieve: for example, it is harder to produce a set of functions which dominates every function $\omega\rightarrow\omega$ than it is to produce one which escapes every function $\omega\rightarrow\omega$, and the corresponding inequality on the computability-theoretic side is ``high implies hyperimmune." Some cardinal characteristics require some appropriate coding to effectivize, such as $cov(\mathcal{L})$ = the smallest number of null sets which cover $\mathbb{R}$, but such coding can be done in a natural way via effective notions of null/meager sets. 

In this article we continue the study of effective cardinal characteristics, focusing on a class of invariants introduced by Newelski and Roslanowski \cite{newelski1993ideal} and further studied by Geschke and Kojman \cite{geschkekojman}. These measure roughly how many copies of a closed set it takes to fill a given space --- for example, how small a set of $2$-branching trees $T\subseteq 3^{<\omega}$ can be while still having every element of $3^\omega$ be a branch through a tree in the set. These characteristics lend themselves to multiple computability-theoretic interpretations which we explore, especially in connection with computable traceability. We show that the various notions so resulting are reasonably distinct, and exhibit a mixture of strength and weakness: for example, the simplest effective localization notions are ``is not a path through a computable $k$-branching subtree of $(k+1)^{<\omega}$" ({\em $k$-surviving}) for $k>1$. We study these in Section $2$. Our main result in this section is that these notions form a strict hierarchy, and interact with computable traceability in a nice way:

\begin{theorem} For each $k$ there is a $k$-surviving, not $(k+1)$-surviving degree which is computably traceable.
\end{theorem}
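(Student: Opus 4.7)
The ``not $(k+1)$-surviving'' half comes for free once we commit to building $x \in (k+1)^\omega$: the tree $(k+1)^{<\omega}$ is itself a computable $(k+1)$-branching subtree of $(k+2)^{<\omega}$, and $x$ is automatically a path through it. So the plan reduces to building $x \in (k+1)^\omega$ that is simultaneously $k$-surviving and computably traceable. I will do this by a $\emptyset''$-oracle fusion-style tree forcing. Fix in advance a computable order function $h$ (the intended uniform trace bound). The construction produces a descending sequence $(k+1)^{<\omega} = T_0 \supseteq T_1 \supseteq \cdots$ of computable subtrees of $(k+1)^{<\omega}$ together with a nested sequence of initial segments $\sigma_s \in T_s$, maintaining the invariant that each $T_s$ contains a distinguished computable \emph{spine} $R_s \subseteq T_s$ combinatorially isomorphic (via a non-level-preserving map) to the full $(k+1)$-branching tree $(k+1)^{<\omega}$. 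The spine's branching levels are spaced sparsely enough that $R_s$, and hence $T_s$, remains $h$-bounded.

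Requirements are handled in alternation. For a diagonalization requirement $D_e$ against a candidate $k$-branching subtree $W_e \subseteq (k+1)^{<\omega}$, I verify using $\emptyset''$ that $W_e$ is actually of this form, then walk the spine $R_s$ above $\sigma_s$ to its next branching node: that node has $k+1$ successor branches within $R_s$ while $W_e$ admits at most $k$, so some spine-branch $\sigma_{s+1}$ lies outside $W_e$; since $W_e$ is downward closed this kills all of $[W_e]$ in one stroke. For a traceability requirement $N_e$ for $\Phi_e$, I use $\emptyset''$ to compute, for each $n$, the set $V_n = \{\Phi_e^\tau(n) : \tau \in T_s \text{ extending } \sigma_s\}$ modulo convergence. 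If $|V_n| \leq h(n)$ everywhere, the trace $(V_n)_{n \in \omega}$ already works; otherwise I apply a majority-style pigeonhole at the spine-branching levels above $\sigma_s$, shrinking $T_s$ to $T_{s+1}$ so that the new $V_n$ fits under $h(n)$, while carving out a sub-spine $R_{s+1} \subseteq T_{s+1}$ still isomorphic to $(k+1)^{<\omega}$. The final $x = \bigcup_s \sigma_s$ is a path through $\bigcap_s T_s$ and is $k$-surviving (from the $D_e$-stages) and computably traceable with bound $h$ (from the $N_e$-stages).

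The main technical obstacle is the pigeonhole step in $N_e$: after restricting to paths giving a majority value of $\Phi_e^\tau(n)$, we must still retain $(k+1)$-branching above every surviving spine-node, since future $D_e$-stages depend on this. My plan is to exploit the fact that each spine-branching node already hosts $k+1$ independent sub-spines (each itself isomorphic to $(k+1)^{<\omega}$), so I can apply the pigeonhole recursively within each sub-spine separately and keep one surviving $(k+1)$-branching descendant in each direction. Choosing $h$ to grow rapidly enough that spine-branching levels can be pushed deep enough to absorb the shrinkage at each stage makes this feasible; the careful bookkeeping to show that an $h$-bounded computable sub-spine always survives each pigeonhole step, uniformly across all $e$, is the heart of the argument.
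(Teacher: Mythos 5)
Your proposal has a fundamental gap right at the outset. The claim that ``not $(k+1)$-surviving comes for free once we commit to building $x \in (k+1)^\omega$'' confuses the function-level property with the degree-level property. Being a not-$(k+1)$-surviving \emph{degree} means that \emph{every} total $f\in(k+2)^\omega$ (and more generally every $f\in(\ell+1)^\omega$ for $\ell\ge k+1$) computable from $x$ must lie on some computable $(k+1)$-branching (resp.\ $\ell$-branching) subtree of $(k+2)^{<\omega}$ (resp.\ $(\ell+1)^{<\omega}$). The fact that $x$ itself is a path through $(k+1)^{<\omega}$ says nothing about the other bounded functions that $x$ computes. So your ``plan reduces to...'' step is not a reduction at all: you have dropped a requirement, not discharged it.

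Nor is computable traceability a substitute. Your intended trace bound is an order function $h$, hence unbounded; if $\Phi_e^x\in(k+2)^\omega$, intersecting the trace sets with $\{0,\dots,k+1\}$ still leaves up to $k+2$ candidates per level once $h(n)\ge k+2$, so the induced tree can branch $(k+2)$-fold and need not be (contained in) a $(k+1)$-branching tree. Thus traceability alone proves ``not $(k+2)$-surviving,'' one step short of what is needed. The paper closes exactly this gap by designing the pruning at stage $P_e$ to force $\varphi_e^A$ to lie on a computable $(k+1)$-\emph{tree} $U_s$: the key move is to find, above each node, a split into $k+1$ extensions whose $\varphi_e$-images eventually disagree, so $U_s$ branches at most $(k+1)$-fold. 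That single object then does double duty: its $n$th level has size at most $(k+1)^n$, giving the computable trace, and its boundedness gives not-$\ell$-surviving for all $\ell\ge k+1$. Your $N_e$ stages, which aim only for $h$-boundedness and make no effort to keep the branching factor of the resulting trace-tree at $k+1$, would have to be redesigned to produce this $(k+1)$-tree, at which point you would essentially be reconstructing the paper's $P_e$ argument. The diagonalization half ($D_e$ versus $R_e$) and the use of $(k+1)$-branching ``spine'' trees as forcing conditions are in line with the paper, but without repairing the above, the theorem is not proved.
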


A more complicated picture emerges in Section $3$ when we consider covering $\omega^\omega$ with closed sets. Here a difficulty arises in the effective setting with no classical analogue: there is no computable way to pass from a tree which branches at most $n$ times at each branching node (``$n$-tree") to a tree which branches exactly $n$ times at each branching node (``$n$-branching tree"). This gives us two separate tracks of cardinal characteristics, and leads to a somewhat messy picture. We show, for example:

\begin{theorem} There is a computably traceable degree containing a globally branch surviving function that is not $3$-globally tree surviving.
\end{theorem}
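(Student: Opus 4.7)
The plan is to construct $f\in\omega^\omega$ by an effective finite-injury construction, building simultaneously a computable pruned 3-tree $T^*\subseteq\omega^{<\omega}$ with $f\in[T^*]$. The very existence of $T^*$ witnesses that $f$ is not $3$-globally tree surviving, so what remains is to force $f$ to be globally branch surviving while keeping $\deg(f)$ computably traceable. We build $T^*$ by extending one designated leaf (namely $f\restrict k$, for the current $k$) level by level, keeping the other leaves at the current level either dead-ended or linearly extended in a way that keeps $T^*$ pruned and at-most-$3$-branching.

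For globally branch surviving, we meet one diagonalization requirement for each pair $(e,i)$: if $\varphi_e$ is a total computable function $h$ and the $i$-th computable subtree $T_{e,i}\subseteq\omega^{<\omega}$ is $h$-branching, then $f\notin[T_{e,i}]$. At the stage addressing $(e,i)$, let $\sigma=f\restrict k$ be the initial segment of $f$ committed so far. We enumerate the at most $h(k)$ immediate successors $\sigma^\frown b_1,\ldots,\sigma^\frown b_l$ of $\sigma$ in $T_{e,i}$ (if $\sigma\notin T_{e,i}$ we are already done), then extend $T^*$ at $\sigma$ by a single new child $\sigma^\frown n$ with $n>\max_j b_j$, and set $f(k)=n$. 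Adding one new branch preserves the at-most-$3$-branching property and forces $f\restrict(k+1)\notin T_{e,i}$. The key point is that $3$-trees in $\omega^{<\omega}$ are free to use any labels in $\omega$, whereas an $h$-branching tree uses exactly $h(k)$ specific labels at each branching node --- so a sufficiently fresh label always diagonalizes.

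Computable traceability is obtained by imposing a sparse-growth constraint on $T^*$: we ensure that the number of nodes of $T^*$ at level $n$ is bounded by some prescribed computable function $p(n)$. Since $T^*$ is at most $3$-branching and each diagonalization step introduces only one new branching node, arranging to branch rarely keeps $T^*$ within any desired slow bound $p$, yielding computable traceability of $\deg(f)$ by the standard argument (each $\Phi_e^f(n)$ takes one of at most $p(u_e(n))$ values, enumerable computably). The main obstacle is interleaving: each diagonalization adds a branching that accelerates the growth of $T^*$, and we must also keep $T^*$ pruned so $f$ always has a live extension. Both are handled by a standard priority arrangement --- at each stage attend to one diagonalization requirement and then append linear (non-branching) extensions to the leaves of $T^*$ until the next requirement is reached --- which keeps $T^*$ pruned, at-most-$3$-branching, and well within the allowed growth rate.
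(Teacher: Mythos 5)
Your high-level plan is the right one (build a function $f$ that lives on a computable $3$-tree but escapes every computable $k$-branching tree, while controlling the degree to be computably traceable), and it matches the paper's strategy of using the tree constructed in Proposition 3.3 as the ambient space. However, the proposal has one serious gap and one subsidiary one.

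The serious gap is the traceability argument. You write that once $T^*$ has at most $p(n)$ nodes at level $n$, each $\Phi_e^f(n)$ takes one of at most $p(u_e(n))$ values, ``enumerable computably.'' This does not follow. The use $u_e(n)$ depends on $f$, which is non-computable, so $p(u_e(n))$ is not a computable bound. The natural candidate trace $\phi(n)=\{\Phi_e^\sigma(n) : \sigma\in T^*,\ \Phi_e^\sigma(n)\!\downarrow\}$ need not even be finite: $T^*$ has infinitely many nodes, and along different branches $\Phi_e^\sigma(n)$ may first converge at arbitrarily deep levels and give different values, so this set can be infinite and there is in general no computable stage at which to stop searching. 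Controlling the \emph{width} of the tree does not control when $\Phi_e$ converges along it. What is actually needed --- and this is the core of the paper's proof --- is a pruning step that \emph{coordinates} convergence with splitting: prune $T^*$ so that $\Phi_e^\sigma$ is forced to produce new output values \emph{exactly} at splitting nodes (and that every branch either forces partiality or forces convergence). Then, since a $3$-tree has at most $3^n$ nodes with $n$ splitting nodes below them, the set $\{\Phi_e^\sigma\!\restriction n\}$ has at most $3^n$ elements, giving a computable trace with the bound $3^n$. Your proposal never introduces this coordination, and without it the traceability claim fails.

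The secondary gap concerns effectiveness of the construction of $T^*$. To diagonalize against the pair $(e,i)$ you ``enumerate the at most $h(k)$ immediate successors of $\sigma$ in $T_{e,i}$,'' but this presupposes both that $\varphi_e$ (giving the branching degree) halts and that the tree $T_{e,i}$ is computable. If either fails, the construction stalls at that stage, and $T^*$ fails to be computable --- which destroys the ``not $3$-globally tree surviving'' conclusion. The paper handles this by the standard ``looks like at stage $s$'' move (Proposition 3.3), only acting on $\langle e,k\rangle$ when $\varphi_{e,s}$ currently looks like a $k$-branching tree containing the current node; if it never does, nothing needs to be done. This is patchable, but the patch interacts with the traceability construction above (it introduces extra branching into $T^*$) and has to be integrated carefully, as in the paper's triples $\langle p,T,g\rangle$ with a task-labeling function $g$. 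I'd suggest re-reading Theorem 3.4's proof with particular attention to how the function $g$ assigns diagonalization tasks to nodes and how step 5 of the odd-stage pruning ensures new values of $\varphi_e$ appear only at splits.
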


(These notions are defined in the beginning of Section $3$.)

\begin{theorem}\label{acctreeforce} There is a real $A\in\omega^\omega$ which is not a path through any computable $k$-tree for any $k\in\omega$ but which does not compute any $f\in 3^\omega$ which is not a path through any computable $2$-branching tree.
\end{theorem}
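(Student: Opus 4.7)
The proof is a forcing construction. Let $\mathbb{P}$ be the notion of forcing whose conditions are computable superperfect trees $T\subseteq\omega^{<\omega}$---computable $T$ in which every node extends to an $\omega$-branching splitting node---ordered by inclusion; the generic $A\in\omega^\omega$ will be the unique path common to the conditions in a sufficiently generic filter.

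The first requirement, that $A$ escape every computable $k$-tree, is handled by a standard density argument. Given a computable $k$-tree $P$ and a condition $T$: at each splitting node $\sigma^*\in T$, among the $\omega$ many immediate successors at most $k$ can remain in $P$ (since $P$ is $k$-branching), so infinitely many escape $P$; iterating this restriction along a fusion yields $T'\le T$ with $[T']\cap[P]=\emptyset$. As there are countably many computable $k$-trees across all $k$, meeting all such dense sets along the fusion secures the first property.

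The heart of the argument is the second requirement. The key technical lemma is: for each Turing functional $\Phi_e$ and each condition $T$, there exist $T'\le T$ and a \emph{computable} $2$-branching tree $B_e\subseteq 3^{<\omega}$ (computable uniformly from $e$ and a computable index for $T$) such that $T'\Vdash(\Phi_e^A\in 3^\omega\to\Phi_e^A\in[B_e])$. We build $B_e$ and a fusion of subconditions in tandem. At stage $s$, for each leaf $\tau\in B_e$ and each committed splitting node $\sigma^*$ with $\Phi_e^{\sigma^*}$ compatible with $\tau$, we effectively prune the $\omega$-many successors of $\sigma^*$ to a computable infinite subset whose $\Phi_e$-extensions all lie above a single common child $\tau\concat i(\sigma^*)\in 3^{<\omega}$---a pigeonhole step that is computable from the computable data of $T$ and $\Phi_e$. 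The set $\{i(\sigma^*):\sigma^*\text{ committed above }\tau\}$ has at most three elements; if at most two, they are added as children of $\tau$ in $B_e$; if three, we drop the least (in a fixed computable order) and excise the committed splitting nodes of that color from the condition.

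The principal obstacle is showing this excision preserves superperfection: removing a committed splitting node $\sigma^*$ whose color was dropped may leave a branch of $T$ through $\sigma^*$ without further splitting. We handle this by interleaving a ``refreshment'' step in the fusion that, for each affected node of $T$, explicitly finds a new $\omega$-branching splitting node above it---possible because any computable superperfect tree retains $\omega$-branching splitting nodes above any surviving node, and such a node can be located computably. Since every decision (pigeonhole witnesses, dropped colors, refreshments) is made computably from the computable data at its stage, the resulting $B_e$ is computable uniformly in $e$, completing the argument.
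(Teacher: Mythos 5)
The student's proof takes a genuinely different route from the paper, and I believe it has a gap that is not merely expository. The paper does not use superperfect (Miller) trees; it introduces a bespoke forcing with \emph{accelerating} trees --- subtrees $T\subseteq\omega^{<\omega}$ in which a splitting node with $n$ splitting predecessors must have more than $n+2$ immediate successors. This calibration is not incidental: the crucial combinatorial fact is that from any $3^n$ elements of $3^\omega$ one can extract $n$ whose restrictions form a $2$-tree, and the accelerating growth is chosen precisely so that this exponential-to-linear pruning can be iterated level by level while still producing an accelerating subtree. Your superperfect conditions have $\omega$-branching at every splitting node, which looks more generous but actually defeats the effectivity you need: given the (computable) colors $c_0,c_1,c_2,\dots\in\{0,1,2\}$ of the $\omega$ successors of a committed $\sigma^*$, the pigeonhole tells you some class $\{i:c_i=j\}$ is infinite, but you cannot computably determine which $j$, so there is no computable way to select the infinite monochromatic subset your pruning requires. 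The paper sidesteps this entirely, since its pruning step is always among the \emph{finitely many} successors available at a given level, so the pigeonhole is a finite, decidable search.

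The ``excision and refreshment'' step is the second gap. When three committed splitting nodes deliver three distinct colors and you drop one, the surviving single path through the excised $\sigma^*$ was already pruned so that its $\Phi_e$-image at position $\lvert\tau\rvert$ equals the dropped color. A new $\omega$-splitting node found above that path therefore still produces the dropped color at $\lvert\tau\rvert$, so the excision does not remove the offending third value from $B_e$. Removing the entire subtree above such $\sigma^*$'s instead would threaten superperfection at the predecessor splitting node whenever the offending color dominates among its successors --- and by the same pigeonhole you cannot computably rule that out. The paper's construction avoids this cascade altogether by never needing to discard a color once committed: since only $n$ survivors are retained at each level and those $n$ are chosen \emph{by} Lemma-4.5-style peeling to already form a $2$-tree in image, there is no ``three colors, drop one'' situation to repair. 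I recommend reworking the argument around accelerating trees (or another finitely-branching variant with controlled growth) rather than superperfect ones; the finiteness at each splitting level is what makes both the pigeonhole and the level-by-level construction of the computable $2$-tree $U_e$ go through.
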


(Following the results of this paper, the first author showed \cite{ongay} that the forcing used to prove this latter theorem also establishes a new possible separation on the set-theoretic side.)

\bigskip

We recommend Blass' paper \cite{blass} to the reader further interested in cardinal characteristics. We would like to thank Arnie Miller for providing useful feedback on an early draft of this paper.

%%%%%%%
%%%%%%%
%%%%%%%
%%%%%%%
%%%%%%%
%%%%%%%
%%%%%%%
%%%%%%%
%%%%%%%
%%%%%%%
%%%%%%%
%%%%%%%
%%%%%%%
%%%%%%%
%%%%%%%
%%%%%%%

\section{Surviving degrees}

\begin{definition} A {\em $k$-branching tree } is a tree such that every node has either $1$ or $k$ many successors.
\end{definition}

The following definition is due to  \cite{newelski1993ideal}:

\begin{definition}
The {\em $k$-localization number}, $\mathfrak{L}_{k}$, is the smallest cardinal such that $(k+1)^{\omega}$ can be covered by $k$-branching trees.
\end{definition}

Following Rupprecht's analogy, the computability-theoretic version of the localization number is the following highness property:

\begin{definition} A function $f\in (k+1)^\omega$ is {\em $k$-surviving } if it is not a path through any computable $k$-branching subtree of $(k+1)^{<\omega}$. We say that a Turing degree is $k$-surviving if it computes a $k$-surviving function.
\end{definition}

We call these $k$-surviving degrees since they are the ones that go into the forest of $k$-branching trees and are able to escape it: they survive the experience. Note that this definition requires $k>0$ to make sense, and for $k=1$ trivializes: ``$1$-surviving" is just ``non-computable." So we are only interested in $k\ge 2$.

Before we begin analyzing the $k$-surviving degrees, there is a subtlety here which will matter later:

\begin{definition} A {\em $k$-tree } is a tree such that every node has at least $1$ and at most $k$ successors.
\end{definition}

Classically, the localization numbers can be equivalently defined in terms of $k$-trees rather than $k$-branching trees, and on the computability-theoretic side we can effectively pass from a $k$-tree contained in $n^{<\omega}$ for finite $n$ to a $k$-branching tree containing it, so this is not an issue at the moment. However, in general a computable $k$-tree merely contained in $\omega^{<\omega}$ may not be contained in a computable $k$-branching tree, as we will see below; this will give us two distinct computable analogues of the class of localization numbers in $\omega^{<\omega}$ in section $3$.

As an initial observation, it is easy to see that the $k$-surviving degrees form a hierarchy as $k$ varies:

\begin{lemma}\label{surviving implications}
Given $k\geq s\geq 2$, a $k$-surviving degree is also an $s$-surviving degree. 
\end{lemma}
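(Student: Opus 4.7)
The plan is to show, given $f \in (k+1)^\omega$ of $k$-surviving degree, that the $f$-computable function $g(n) := \min(f(n), s) \in (s+1)^\omega$ is itself $s$-surviving. I argue by contradiction: supposing $g$ is a path through some computable $s$-branching subtree $T$ of $(s+1)^{<\omega}$, I manufacture a computable $k$-branching subtree $T' \subseteq (k+1)^{<\omega}$ having $f$ as a path, which contradicts the $k$-surviving assumption on $f$.

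The first step is to form the ``pullback'' $T'_0 := \{\sigma \in (k+1)^{<\omega} : \hat\sigma \in T\}$, where $\hat\sigma$ replaces each entry of $\sigma$ exceeding $s$ by $s$. Since $\hat{f \restrict n} = g \restrict n \in T$, we have $f \in [T'_0]$, and $T'_0$ is computable from $T$. A short case analysis — splitting on whether a node $\hat\tau \in T$ has $1$ successor in $T$ (and if so, whether the successor value is $s$) or $s$ successors (and if so, whether $s$ is among them) — shows that every $\tau \in T'_0$ has $1$, $s$, $k-s+1$, or $k$ many immediate successors in $T'_0$, with only $s$ and $k-s+1$ incompatible with $k$-branching.

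The second step patches $T'_0$ into a $k$-branching $T' \supseteq T'_0$. At each $\tau \in T'_0$ whose number of immediate successors in $T'_0$ equals $s$ or $k-s+1$, I add further children $\tau \concat i$, choosing $i$ freely from the values in $\{0,\ldots,k\}$ not already used (an easy count shows enough unused values are available in both cases), so as to bring the total to exactly $k$. Each newly-added child is extended downward by a single infinite ray (one successor per level, appending $0$ forever). The resulting $T'$ is $k$-branching by design, is uniformly computable in $T$, and still contains $f$ as a path, since all padding happens off of $[T'_0]$.

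I expect no serious obstacle here: the main content is the finite case analysis in the first step and the counting that ensures sufficient unused child-values in the second. Both are bookkeeping exercises; the only ``idea'' is the combination of the projection $f \mapsto g$ with the linear-ray padding trick that converts an arbitrary finite branching count into a $k$-branching one without disturbing any preexisting path.
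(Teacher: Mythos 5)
Your proof is correct and takes essentially the same route as the paper: both pull back $T$ under (the induced map of) a surjection $k+1\to s+1$, obtaining a $k$-tree containing $f$, and then upgrade it to a $k$-branching tree over the finite alphabet. The only difference is cosmetic---you fix the particular surjection $i\mapsto\min(i,s)$ and carry out the $k$-tree-to-$k$-branching padding explicitly, whereas the paper uses an arbitrary surjection and invokes that conversion as a general remark (already stated just before the lemma) about trees over finite alphabets.
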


\begin{proof} Fix a surjection $g:k+1\rightarrow s+1$, and let $g^{\ast}: (k+1)^{<\omega}\rightarrow (s+1)^{<\omega}$ and $\widehat{g}:(k+1)^{\omega}\rightarrow (s+1)^{\omega}$ be the induced computable surjections on the corresponding sets of finite or infinite strings. If $T\subseteq(s+1)^{<\omega}$ is a computable $s$-tree, then $(g^{\ast})^{-1}[T]\subseteq (k+1)^{<\omega}$ is a computable $k$-tree. This means that if $A\in (k+1)^{\omega}$ is a $k$-surviving function then $\widehat{g}(A)\in (s+1)^{\omega}$ is an $s$-surviving function: if $\widehat{g}(A)$ were in some computable $s$-tree, then pushing this forward we would have a computable $k$-tree containing $A$.

%Since $k\geq s$ there is a surjective function $g:k+1\rightarrow s+1$. This functions creates natural surjective functions $g^{\ast}:(k+1)^{<\omega}\rightarrow (s+1)^{<\omega}$ and $\widehat{g}:(k+1)^{\omega}\rightarrow (s+1)^{\omega}$.

%so if $A\in (k+1)^{\omega}$ is a $k$-surviving function then $\widehat{g}(A)\in (s+1)^{\omega}$ is an $s$-surviving function.

%To see this, given a computable $s$-tree $T\subseteq(s+1)^{<\omega}$, then $(g^{\ast})^{-1}[T]$ is a $k$-tree in $(k+1)^{<\omega}$. We know that $A$ is not a branch of it. Hence, $\widehat{g}(A)$ is not a branch of $g[(g^{\ast})^{-1}[T]]=T$. 
\end{proof}

Newelski and Roslanowski showed in \cite{newelski1993ideal} the following: $(i)$ for $k\geq 2$, $\mathfrak{L}_{k}\geq \max\{cov(\mathcal{M}), cov(\mathcal{N})\}$, $(ii)$ $\mathfrak{L}_{k+1}\leq\mathfrak{L}_{k}$, and $(iii)$ it is consistent that $\mathfrak{L}_{k+1}<\mathfrak{L}_{k}$. We can mimic all of those results in the computable side. Lemma \ref{surviving implications} is an analogue for $\mathfrak{L}_{k+1}\leq\mathfrak{L}_{k}$. All the subsets of $n^{\omega}$ that are covered by a $k$-branching computable tree (with $k<n$) are effectively meager and null, giving us:

\begin{theorem}\label{surviving null}
All the degrees that compute a Schnorr random real are $k$-surviving for all $k\geq 2$. In particular, there is a $k$-surviving degree that is DNC and a $k$-surviving degree that is not computable traceable.
\end{theorem}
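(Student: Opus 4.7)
The plan is to show that every Schnorr random element of $(k+1)^{\omega}$ (under the uniform measure) is itself $k$-surviving; the theorem then follows because any degree computing a Schnorr random in $2^{\omega}$ computes one in $(k+1)^{\omega}$ via a standard computable measure-preserving base change (for instance, via base conversion of reals in $[0,1]$), and Schnorr randomness is preserved under such maps.

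The key step is to exhibit, from a computable $k$-branching tree $T\subseteq (k+1)^{<\omega}$, a Schnorr test capturing $[T]$. Set $U_n = \{x\in (k+1)^{\omega} : x\restrict n \in T\}$. Since each node of $T$ has at most $k$ successors, a simple induction gives $|T \cap (k+1)^{n}| \leq k^n$, so $\mu(U_n) \leq (k/(k+1))^n$. The sequence $\langle \mu(U_n)\rangle_n$ is uniformly computable from $T$ and tends to $0$ effectively because of the explicit exponential bound, so after thinning to rate $2^{-m}$ the $U_n$ form a Schnorr test with $[T] \subseteq \bigcap_n U_n$. Hence no Schnorr random element of $(k+1)^{\omega}$ can be a path through $T$, i.e., every such random is $k$-surviving.

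For the ``in particular'' clause, I would apply this to two well-chosen Schnorr randoms. First, any Martin-L\"of random real is Schnorr random, and by the classical fact that every Martin-L\"of random real is of DNC degree (e.g., Kjos-Hanssen--Merkle--Stephan), any ML-random degree is simultaneously DNC and $k$-surviving. Second, by the Kucera--G\'acs theorem there is a Martin-L\"of random computing $\emptyset'$; this gives a $k$-surviving degree of high Turing degree, which in particular is not hyperimmune-free and therefore not computably traceable.

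I expect the only point needing care is verifying that $\langle\mu(U_n)\rangle$ genuinely witnesses Schnorr nullity of $[T]$ (both that the measures are computable from $T$ and that the convergence is effective), but the closed-form bound $(k/(k+1))^n$ makes both facts immediate. The base-change step and the classical inputs about ML-randoms are standard and should be cited rather than reproved.
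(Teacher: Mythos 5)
Your proposal is correct and matches the approach the paper takes, which is to observe that the branches through a computable $k$-branching subtree of $(k+1)^{<\omega}$ form a Schnorr null class (via the measure bound $(k/(k+1))^n$) and hence every Schnorr random avoids them. The paper only states this as a one-line remark about effective nullity; you have filled in the details (the explicit Schnorr test, the base change to $(k+1)^\omega$, and the standard ML-random witnesses for DNC and non-traceability), but the underlying argument is the same.
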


\begin{theorem}\label{surviving meager}
All the degrees that compute a weak 1-generic (equivalently, all hyperimmune degrees) are $k$-surviving for all $k\geq 2$. In particular, there is a $k$-surviving degree that is weakly Schnorr engulfing.
\end{theorem}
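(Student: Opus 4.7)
The strategy is to show that every weak 1-generic real in $(k+1)^\omega$ is already $k$-surviving, and then to transfer this to the formulation in terms of hyperimmune degrees via Kurtz's theorem.

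For the first step, given any computable $k$-branching subtree $T\subseteq (k+1)^{<\omega}$, let $D_T := (k+1)^{<\omega}\setminus T$. Then $D_T$ is dense and computable: if $\sigma\in T$, then since $\sigma$ has at most $k$ of its $k+1$ possible immediate successors in $T$, some $\sigma\concat\langle i\rangle$ lies in $D_T$; and if $\sigma\notin T$ then $\sigma\in D_T$ already. Any weak 1-generic $g\in (k+1)^\omega$ meets every dense c.e.\ set of strings in $(k+1)^{<\omega}$, so in particular $g\restrict n\in D_T$ for some $n$, and hence $g$ is not a path through $T$. As $T$ was an arbitrary computable $k$-branching tree, $g$ is $k$-surviving.

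To reach the theorem as stated, I would invoke Kurtz's characterization (a degree is hyperimmune iff it computes a weak 1-generic in $\omega^\omega$) and then push any such $g\in\omega^\omega$ forward to a weak 1-generic $h\in (k+1)^\omega$ by setting $h(n)=g(n)\bmod (k+1)$. The preimage under this digit-wise reduction of any dense c.e.\ $D\subseteq (k+1)^{<\omega}$ is a dense c.e.\ subset of $\omega^{<\omega}$: given $\sigma\in\omega^{<\omega}$, reduce it mod $k+1$, extend inside $D$ to some $\rho$, and then copy the extending coordinates of $\rho$ back verbatim to produce an extension of $\sigma$ in $\omega^{<\omega}$ whose mod-$(k+1)$ image is $\rho$. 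Thus $g$ meeting the preimage forces $h$ to meet $D$, which combined with the previous paragraph yields the main assertion. For the ``in particular'' clause I would cite the known existence of a weakly Schnorr engulfing hyperimmune degree from the literature and apply the main assertion.

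The only real obstacle here is bookkeeping: the density of $D_T$ is immediate from the $k$-branching hypothesis, and the mod-$(k+1)$ reduction transfers weak genericity cleanly. The nontrivial external inputs are Kurtz's theorem and, for the ``in particular'' clause, the cited existence result.
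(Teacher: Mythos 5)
Your argument is correct and fills in exactly the proof the paper leaves implicit: the text preceding the theorem observes that branches of a computable $k$-branching subtree of $n^{<\omega}$ (with $k<n$) form an effectively meager set, and your density of $D_T=(k+1)^{<\omega}\setminus T$ is precisely the witness for that effective meagerness, so the core step coincides with the paper's intended route. The only place I would tighten the write-up is the transfer step: you invoke a Baire-space form of Kurtz's theorem (hyperimmune iff computes a weak 1-generic in $\omega^\omega$), which is true but less standard than the Cantor-space version, and the subsequent mod-$(k+1)$ pushforward then has to be checked; it is more economical, and avoids citing anything non-standard, to run Kurtz's construction directly in the finitely-branching space $(k+1)^\omega$ (where it is verbatim the $2^\omega$ argument) and skip the $\omega^\omega$ intermediary entirely. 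The appeal to the literature for the ``in particular'' clause is appropriate.
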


Since there is a Schnorr random which is hyperimmune-free (see e.g. \cite{bbtnn} \S 4.2 (2))  and a hyperimmune degree that does not compute a Schnorr random (See e.g. \cite{niesbook} Theorem 1.8.37) we have:

\begin{corollary}
For all $k\geq 2$ there is a $k$-surviving degree which does not compute a Schnorr random.
\end{corollary}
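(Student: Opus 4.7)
My plan is to read this as an immediate corollary of Theorem \ref{surviving meager} together with the computability-theoretic separation cited just before the statement. First I would invoke the fact (e.g., \cite{niesbook} Theorem~1.8.37) that there is a hyperimmune Turing degree $\mathbf{d}$ which does not compute any Schnorr random real. Then I would apply Theorem \ref{surviving meager}: every hyperimmune degree is $k$-surviving for every $k\geq 2$. Thus $\mathbf{d}$ is $k$-surviving for every $k\geq 2$ while computing no Schnorr random, which is exactly what the corollary asks for.

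A small point worth emphasizing in the write-up is that one does not need to produce a different witness for each $k$: the single degree $\mathbf{d}$ works uniformly, since Theorem \ref{surviving meager} gives $k$-survival for all $k\geq 2$ simultaneously. Dually, the companion fact that there is a Schnorr random of hyperimmune-free degree (cited from \cite{bbtnn}) is not needed here --- that is the ingredient for the reverse direction of the separation, namely finding a $k$-surviving degree that does not compute a hyperimmune function, and is what motivates mentioning both citations in the paragraph preceding the corollary.

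The main (indeed only) obstacle is lining up the two black-box inputs correctly. No construction is required beyond what is already done in Theorem \ref{surviving meager}; once that theorem is in hand, the corollary is a one-line consequence of the existence of a hyperimmune degree avoiding Schnorr randomness.
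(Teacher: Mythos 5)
Your argument matches the paper's exactly: Theorem \ref{surviving meager} shows hyperimmune degrees are $k$-surviving for all $k\geq 2$, and combining this with the existence (cited from \cite{niesbook}) of a hyperimmune degree computing no Schnorr random yields the corollary immediately. You also correctly observe that the hyperimmune-free Schnorr random ingredient is there for the companion corollary about hyperimmune-free $k$-surviving degrees, not this one.
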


\begin{corollary}
For all $k\geq 2$ there is a $k$-surviving degree which is hyperimmune free.
\end{corollary}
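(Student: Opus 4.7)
The plan is essentially immediate: combine Theorem \ref{surviving null} with the existence of a hyperimmune-free Schnorr random mentioned in the sentence just before the corollary. Specifically, the cited reference \cite{bbtnn} \S 4.2 (2) furnishes a real $R\in 2^\omega$ that is Schnorr random and whose Turing degree $\mathbf{d}=\deg(R)$ is hyperimmune-free. By Theorem \ref{surviving null}, since $\mathbf{d}$ computes the Schnorr random $R$, the degree $\mathbf{d}$ is $k$-surviving for every $k\geq 2$. Thus $\mathbf{d}$ simultaneously witnesses both desired properties, proving the corollary.

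Concretely, I would write: ``Let $R$ be a hyperimmune-free Schnorr random, which exists by \cite{bbtnn} \S 4.2 (2). Then $\deg(R)$ is hyperimmune-free by construction, and is $k$-surviving for every $k\geq 2$ by Theorem \ref{surviving null}.'' The statement is phrased for a fixed $k\geq 2$ but Theorem \ref{surviving null} gives the conclusion uniformly over all $k\geq 2$, so a single degree $\mathbf{d}$ works for every $k$ simultaneously.

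There is no real obstacle here; the corollary is explicitly flagged in the preceding sentence as a consequence of juxtaposing the theorem with the cited existence result. The only thing worth double-checking is that Schnorr randomness of $R$ is genuinely inherited from $R$ as a real (so that $\deg(R)$ computes a Schnorr random in the sense required by Theorem \ref{surviving null}), which is automatic since $R\leq_T R$. No additional construction, forcing, or priority argument is needed.
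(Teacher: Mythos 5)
Your proof is correct and matches the paper's intended argument exactly: the corollary is stated immediately after the paper notes the existence of a hyperimmune-free Schnorr random (citing \cite{bbtnn} \S 4.2 (2)), and Theorem \ref{surviving null} then yields that its degree is $k$-surviving for every $k\geq 2$. Nothing further is needed.
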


Notice that Theorem \ref{surviving null} and Theorem \ref{surviving meager} are the effective analogues of $\mathfrak{L}_{k}\geq \max\{cov(\mathcal{M})$ and $cov(\mathcal{N})\}$ respectively.

Finally, we turn to the converse of Lemma \ref{surviving implications}: we show that the $k$-surviving degree are truly hierarchical, i.e., that there are $k$-surviving degrees that are not $s$-surviving, with $s>k$. This is the computable analogue of the conditional consistency of $\mathfrak{L}_{k+1}<\mathfrak{L}_{k}$.

\begin{theorem}\label{surviving hierarchy}
Given $k\geq 2$, there is a $k$-surviving degree that is not $\ell$-surviving for $\ell\geq k+1$. Furthermore, it is possible to make this degree computable traceable. 
\end{theorem}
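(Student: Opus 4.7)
The plan is to build $f\in(k+1)^\omega$ by a forcing argument. Conditions are computable $(k+1)$-branching subtrees $T\subseteq(k+1)^{<\omega}$ with infinitely many branching levels (together with a finite-stem component so that the filter selects a single real), ordered by reverse inclusion. The generic $f$ is the unique point in $\bigcap\{[T]:T\text{ in the filter}\}$.

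I meet three families of dense requirements. First, \textbf{escape:} for each computable $k$-branching subtree $S\subseteq(k+1)^{<\omega}$, force $f\notin[S]$. This is the one place the gap $(k+1)>k$ enters directly: at the next branching node $\sigma^*$ of $T$ above the stem, $T$ has $k+1$ children while $S$ has at most $k$, so some child of $\sigma^*$ in $T$ lies outside $S$; extending the stem there gives $\sigma^*\concat i\notin S$, hence $f\notin[S]$. Second, \textbf{use functions:} for each Turing functional $\Phi_e$, a Mathias/Silver-style fusion either forces $\Phi_e^f$ to be partial or produces a computable use $u_e$, while preserving the $(k+1)$-branching structure. This also hands us computable traceability, since $\Phi_e^f(n)$ always lies in the computable set $\{\Phi_e^\sigma(n):\sigma\in T\cap(k+1)^{u_e(n+1)}\}$, whose size is bounded computably in $n$ by the width of $T$ at that level.

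Third, \textbf{containment:} for each $\Phi_e$ with $\Phi_e^f\in(k+2)^\omega$, force $\Phi_e^f$ to lie in a computable $(k+1)$-branching subtree of $(k+2)^{<\omega}$. By the observation before Lemma~\ref{surviving implications} on the bounded case, it suffices to exhibit a computable $(k+1)$-tree containing $\Phi_e^f$; the natural candidate is the image tree $S^e:=\{\tau\in(k+2)^{<\omega}:\exists\sigma\in T\cap(k+1)^{u_e(|\tau|)},\,\Phi_e^\sigma\supseteq\tau\}$. The difficulty is that $S^e$ can a priori have $k+2$ children at some node, if distinct preimages collectively realize every value in $\{0,\ldots,k+1\}$ at that position. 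I handle this via two additional subrequirements: \emph{injectivity} of $\sigma\mapsto\Phi_e^\sigma\restrict n$ on $T\cap(k+1)^{u_e(n)}$ (taken to its limit, failure forces $\Phi_e^f$ to be computable, and then (C) is automatic since a single computable path is itself $(k+1)$-branching under the stated convention), and \emph{alignment}, i.e.\ arranging $u_e$ so that exactly one branching level of $T$ lies in each interval $(u_e(n),u_e(n+1)]$. Under injectivity plus alignment, each node of $S^e$ has a unique $T$-preimage whose $k+1$ extensions in $T$ produce at most $k+1$ distinct output values, and so $S^e$ is a $(k+1)$-tree as required.

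The main technical obstacle — and the step I expect to be trickiest — is interleaving these thinning operations in a single fusion while retaining the $(k+1)$-branching property of $T$ together with infinitely many branching levels. Each containment requirement for a new $\Phi_e$ consumes some of the branching data (killing a few branchings outright and realigning $u_e$ with the rest), so one must arrange the enumeration so that each individual requirement uses only a sparse share of the branching levels, leaving cofinally many alive for the escape requirements; a bookkeeping device of the usual fusion type (priorities with a doubling scheme, or a tree of conditions as in computably traceable forcings) should suffice.
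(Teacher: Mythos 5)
Your proposal follows essentially the same strategy as the paper's proof: force with $(k+1)$-branching subtrees of $(k+1)^{<\omega}$ with stems, use the $k+1$ vs.\ $k$ gap at a branching node to escape each computable $k$-tree, and on the other hand thin the condition tree so that $\Phi_e$ maps its paths into a computable $(k+1)$-tree (from which both ``not $\ell$-surviving for $\ell\ge k+1$'' and computable traceability fall out by reading off levels). The organization into escape/use/containment requirements with injectivity-plus-alignment subrequirements is just a repackaging of the paper's explicit construction of $T_{s+1}$ together with the image tree $U_s$, whose levels directly give the trace.

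One claim in the degenerate case is off. You say that if the injectivity thinning ``taken to its limit'' fails, then $\Phi_e^f$ is forced to be computable. That is too strong. The correct dichotomy (and the one the paper uses) is: either one can always find, above every $\tau$ in the current tree, $k+1$ extensions giving $k+1$ pairwise-distinct restrictions of $\Phi_e$, or there is some $\tau$ above which this fails. In the second case $\Phi_e^A$, for $A$ extending $\tau$, can take at most $k$ distinct total values --- it need not be a single value. For instance $\Phi_e$ might behave like a constant together with a single toggle bit, producing exactly two possible outputs above every condition, so injectivity can never be achieved yet $\Phi_e^f$ is not computable. The fix is routine: at most $k$ possible values still form a computable tree with at most $k$ branches, which is a $(k+1)$-tree, and the rest of the argument is unchanged. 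But as written the claim is false, and if you had built the proof relying on ``failure $\Rightarrow$ computable'' you would miss these intermediate cases. I'd also phrase the containment requirement for arbitrary $\varphi_e^A\in\omega^\omega$ rather than restricting to $(k+2)^\omega$: the $(k+1)$-tree $S^e$ you construct already lives in $\omega^{<\omega}$, and then for any $\ell\ge k+1$, intersecting with $(\ell+1)^{<\omega}$ gives an $\ell$-tree which extends effectively to an $\ell$-branching tree; this is what lets the single requirement family handle all $\ell$ at once, as the theorem demands.
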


\begin{proof}

We will force with computable trees, $T$, in $(k+1)^{<\omega}$ such that for every $s\in T$ there is $t$ extending $s$ such that $t$ has more than one successor and, for all $s\in T$, we have that $|\{t\in T: s<t \ \& |t|=|s|+1\}|$ is either $1$ or $k+1$.

We will construct a function $A:\omega\rightarrow k+1$ that satisfy two types of requirements:\begin{itemize}

\item $R_{e}$: Given the $e$-th computable $k$-tree in $(k+1)^{<\omega}$, $A$ is not one of its branches.

\item $P_{e}$: $\varphi_{e}^{A}$ is either partial or is a branch of a computable $k+1$-tree of $\omega^{<\omega}$. In particular, if $\varphi_{e}^{A}:\omega\rightarrow \ell+1$ with $\ell\geq k+1$ then $\varphi_{e}^{A}$ is not $\ell$-surviving.
\end{itemize}

We begin at stage $s=0$ by setting $T_{0}=(k+1)^{<\omega}$. Now suppose that at stage $s+1$ we have a computable $k+1$-branching tree $T_{s}$ such that every branch satisfies all requirements $R_{j}$ and $P_{j}$ for $j<s$.

To satisfy $R_{s}$ we just need to extend the current stem (or root), $r_s$, in such a way that it is not longer in the $s$-th $k$-tree in $(k+1)^{<\omega}$. This is possible because $T_{s}$ has $k+1$ options every time it branches. 
%QUERY: don't we just need that T_s eventually splits somewhere?
Satisfying $P_{s}$ is more complicated, however, and we have two cases which need to be handled separately. The first case happens if there is $t\in T_{s}$  and $n$ such that $r_{s}\subseteq t$ and  $\varphi_{s}^{t'}(n)$ diverges for all $t'\supset t$. Setting $T_{s+1}$ to be the subtree of $T_{s}$ consisting of nodes comparable with $t$ then trivially satisfies $P_s$.

% In this way, for all branches of $T_{s+1}$ we have that $\varphi_{s}^{A}$ is not total.

Now suppose we are unable to force partiality in this way. For all $t\in T_{s}$ extending $r_{s}$ and $n\in \omega$ there is a $t'\in T_{s}$ extending $t$ such that $\varphi_{s}^{t'}(n)$ converges. Now we will create $T_{s+1}$ in such a way that for all branches $A$ of $T_{s+1}$ we have that $\varphi_{s}^{A}$ is total. Furthermore, we can find a computable $k+1$-tree, $U_{s}$, such that for all branches of $T_{s+1}$, $\varphi_{s}^{A}$ is a branch of $U_{s}$. For convenience we will describe $T_{s+1}$ as a function  $(k+1)^{<\omega}\rightarrow (k+1)^{<\omega}$, denote by $t_{\sigma}$, and $U_{s}$ as function $(k+1)^{<\omega}\rightarrow \omega^{<\omega}$, denote by $u_{\sigma}$.

Before starting the construction we can assume one more hypothesis: for all $\tau\in T_{s}$ there exist $\tau_{0},..., \tau_{k}$ in $T_{s}$ extending $\tau$ and $n\in \omega$ such that $\varphi_{s}^{\tau_{i}}(\ell)$ converges for all $i<k+1$ and all $\ell<n$. Also, we need that $\varphi_{s}^{\tau_{i}}\restrict n\neq \varphi_{s}^{\tau_{j}}\restrict n$ for all $i\neq j<k+1$.\footnote{The use of $\tau$ in this paragraph instead of $t$ will simplify the reading later.}

If there is a $\tau$  extending $r_{s}$ such that the above hypothesis is false, that means that $\varphi_{s}^{A}$ can have at most $k$ different values as long as $\tau$ is an initial segment of $A$. Therefore, defining $T_{s+1}$ the subtree of $T_{s}$ extending $\tau$, we can find a computable tree $U_{s}$ with at most $k$ branches such that $\varphi_{s}^{A}$, with $A\in [T_{s+1}]$, is always one of those branches.\footnote{Here $[T]$ is all the branches of the tree $T$.}

Now, back to the construction, our strategy will be define for each node: first find an extension that splits; then, look for extensions of each node in the split (there are exactly $k+1$ of them) that make the function $\varphi_{e}^{t}$ different to each other, with that we keep the $k+1$ branching and we can use the information to define $U_{s}$.

Bringing the strategy to work, at the first stage, let $r_{s}=t_{\emptyset}$. Then look for for the first split above $t_{\emptyset}$ and call those nodes $\tau_{0},..., \tau_{k}$. Next, look for $t_{0},..., t_{k}$ extending  $\tau_{0},..., \tau_{0}$ respectively and $n_{\emptyset}\in \omega$ with $0<n_{\emptyset}$ such that $\varphi_{s}^{t_{i}}\restrict n_{\emptyset}\neq \varphi_{s}^{t_{j}}\restrict n_{\emptyset}$ for all $i\neq j<k+1$ and $\varphi_{s}^{t_{i}}(\ell)$ converges for all $i<k+1$ and all $\ell<n_{\emptyset}$. Define $u_{\emptyset}=\emptyset$ and $u_{i}=\varphi_{s}^{t_{i}}\restrict_{n_{\emptyset}}$.

In general, given $t_{\sigma}$ with $\sigma\in (k+1)^{<\omega}$, look for the first split above $t_{\sigma}$ and call those nodes $\tau_{\sigma 0},..., \tau_{\sigma k}$. Next, look for $t_{\sigma 0},..., t_{\sigma k}$ extending  $\tau_{\sigma 0},..., \tau_{\sigma k}$ respectively and an $n_{\sigma}\in \omega$ with $|\sigma|<n_{\sigma}$ such that $\varphi_{s}^{t_{\sigma i}}\restrict n_{\sigma}\neq \varphi_{s}^{t_{\sigma j}}\restrict n_{\sigma}$ for all $i\neq j<k+1$ and $\varphi_{s}^{t_{\sigma i}}(\ell)$ converges for all $i<k+1$ and all $\ell<n_{\sigma}$. Define $u_{\sigma i}=\varphi_{s}^{t_{\sigma i}}\restrict_{n_{\sigma}}$.

Since $T_{s}$ is computable we have that both $T_{s+1}$ and $U_{s}$ are computable. Furthermore, each split in $U_{s}$ is at most of size $k+1$ so it is a $k+1$-tree and, by construction, given a branch $A$ of $T_{s+1}$ we have that $\varphi_{s}^{A}$ is a branch of $U_{s}$.

Furthermore, if $A\in [T_{s+1}]$ and $\varphi_{s}^{A}$ is total then we can define a computable trace $\phi_{s}:\omega\rightarrow [\omega]^{<\omega}$ such that $\phi_{s}(n)$ is the $n$-th level of $U_{s}$. Notice that $|\phi_{s}(n)|\leq (k+1)^{n}$ and that for all branches $A$ of $T_{s+1}$ we have that $\varphi_{s}^{A}$ goes through $\phi_{s}$.

Finally, $A\in \bigcap_{s\in \omega} [T_{s}]$ is a $k$-surviving degree that is computably traceable degree and not $k+1$-surviving (or $\ell$-surviving for $\ell\geq k+1$).

\end{proof}

\begin{corollary}
Given $k\geq 2$, there is a $k$-surviving degree that is not DNC and not weakly Schnorr Engulfing.
\end{corollary}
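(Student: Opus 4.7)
The plan is to obtain this corollary immediately from Theorem \ref{surviving hierarchy}, invoking two standard structural facts about computably traceable degrees. Fix $k\geq 2$ and let $\mathbf{a}$ be the $k$-surviving, computably traceable degree that Theorem \ref{surviving hierarchy} produces. I claim that any such $\mathbf{a}$ is automatically neither DNC nor weakly Schnorr Engulfing, so no new forcing construction is required.

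First, every computably traceable degree is hyperimmune-free: for any $f\leq_T\mathbf{a}$ with trace $\phi$ of computable size bound, the map $n\mapsto\max\phi(n)$ is a total computable function dominating $f$. In the effective cardinal-characteristic framework of \cite{bbtnn}, weak Schnorr engulfing is a highness notion that implies the existence of a Turing-computable function escaping every computable schedule, i.e.\ implies hyperimmunity, so hyperimmune-freeness precludes it.

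Second, it is a standard theorem (see, e.g., Nies, \emph{Computability and Randomness}, Ch.~8) that no computably traceable degree computes a DNC function. The intuition is that a DNC function must escape every uniformly enumerated sequence of small finite sets, whereas a computable trace exhibits precisely such a sequence containing it; an argument using the $s_{m,n}$ and recursion theorems turns this tension into a contradiction with the DNC property. Applied to $\mathbf{a}$ this gives the ``not DNC'' half of the corollary.

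The main obstacle, if any, is purely bookkeeping: matching the precise definition of ``weakly Schnorr Engulfing'' used in this paper against the implication ``weakly Schnorr Engulfing $\Rightarrow$ hyperimmune'' extracted from \cite{bbtnn}. Once that correspondence is pinned down, the corollary follows from Theorem \ref{surviving hierarchy} with no further construction.
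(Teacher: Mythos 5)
Your overall strategy is exactly the paper's: the corollary is stated with no proof, immediately after Theorem \ref{surviving hierarchy}, and is meant to follow from that theorem together with standard facts about computably traceable degrees. Your ``not DNC'' half is fine --- that no computably traceable degree is DNC is indeed a standard theorem in the literature you cite.

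The ``not weakly Schnorr engulfing'' half, however, contains a gap. You route it through hyperimmune-freeness: computably traceable $\Rightarrow$ hyperimmune-free, and then you claim that weak Schnorr engulfing implies hyperimmunity, so hyperimmune-freeness precludes it. This second implication is not a fact you can extract from \cite{bbtnn}, and I do not believe it is true. Weakly Schnorr engulfing is the effective analogue of $non(\mathcal{N})$-largeness, and $\mathfrak{b}$ (whose highness property is hyperimmunity) does not sit below $non(\mathcal{N})$ in Cicho\'n's diagram; there is no arrow that effectivizes to ``weakly Schnorr engulfing $\Rightarrow$ hyperimmune.'' In particular, hyperimmune-freeness alone does not rule out weak Schnorr engulfing. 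What does rule it out is the Terwijn--Zambella characterization that computable traceability is equivalent to lowness for Schnorr tests: if $A$ is low for Schnorr, any $A$-Schnorr null class engulfing all computable reals would be refined by a \emph{computable} Schnorr null class engulfing all computable reals, which is impossible. This is the argument \cite{bbtnn} use, and it goes directly from computable traceability to ``not weakly Schnorr engulfing'' without passing through hyperimmune-freeness. So the conclusion of the corollary is correct, and your reduction to Theorem \ref{surviving hierarchy} is the right move, but the intermediate implication you lean on for the engulfing half is the wrong lemma; replace it with the lowness-for-Schnorr argument and the proof goes through.
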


Notice that the above construction for $k=1$ give us a non-computable set that is not a $2$-surviving degree.

To further pin-point the location of the surviving degrees in the effective Chicho\'n Diagram it is necessary to compare them to the DNC degrees. This question is still open:

\begin{question}
Is there a DNC degree that is not $k$-surviving?
\end{question}

In the same spirit, we may also ask:

\begin{question}
Is it possible to make a $k$-surviving degree that is not $k+1$ surviving and not computable traceable?
\end{question}

%%%%%%%
%%%%%%%
%%%%%%%
%%%%%%%
%%%%%%%
%%%%%%%
%%%%%%%
%%%%%%%
%%%%%%%
%%%%%%%
%%%%%%%
%%%%%%%
%%%%%%%
%%%%%%%
%%%%%%%
%%%%%%%

\section{Globally surviving degrees}

As seen in the proof of theorem \ref{surviving hierarchy}, it is possible to have degrees such that all their functions $f:\omega\rightarrow \omega$ go through a $k$-subtree of $\omega^{<\omega}$. Therefore, we can define degrees that survive $k$-trees in $\omega^{\omega}$. Here, however, we run into the subtlety mentioned earlier: that since we are no longer working with trees over a finite set, ``computable $k$-branching tree" and ``computable $k$-tree" may behave differently. This leads to two separate tracks of highness notions:

\begin{definition}
\begin{enumerate}
\item A function $g:\omega\rightarrow\omega$ is {\em $k$-globally branch surviving } if it is not a path through any computable $k$-branching tree; a Turing degree $B$ is $k$-globally branch surviving if it computes a $k$-globally branch surviving function.

%such that it is not cover by any computable $k$-branching tree.

\item A function $g:\omega\rightarrow\omega$ is {\em globally branch surviving } if it is $k$-globally branch surviving for every $k\in\omega$; a Turing degree $B$ is globally branch surviving if it computes a globally branch surviving function.

%such that it is not cover by any computable $k$-branching tree for all $k\in \omega$.

\item A function $g:\omega\rightarrow\omega$ is {\em $k$-globally tree surviving } if it is not a path through any computable tree; a Turing degree $B$ is $k$-globally tree surviving if it computes a $k$-globally tree surviving function.
%such that it is not cover by any computable $k$-tree.

\item A function $g:\omega\rightarrow\omega$ is {\em globally tree surviving } if it is not a path through any computable $k$-tree for any $k\in\omega$; a Turing degree $B$ is globally tree surviving if it computes a globally tree surviving function.
%such that it is not cover by any computable $k$-tree for all $k\in \omega$.

\end{enumerate}
\end{definition}

The distinction between $k$-branching trees and $k$-trees is significant. Trivially a $k$-globally tree surviving degree is also $k$-globally branch surviving, and similarly a globally tree surviving degree is globally branch surviving. However, no other coarse implication exists.

We begin by showing that global branch and global tree survival differ wildly on the level of individual functions:

%We will see in Lemma \ref{Tree not cover by branch} below that the distinction between $k$-tree and $k$-branching tree here is significant; that said, note that when $k=2$ the two notions coincide.

%In this case, the definition cares about the difference between $k$-branching subtrees of $\omega^{<\omega}$ and $k$-tree of $\omega^{<\omega}$ (more on this on lemma \ref{Tree not cover by branch}), although, notice that all $2$-trees are actually $2$-branching trees. So the definitions coincide for $k=2$. 

%  MAYBE PUT THIS IN INTRO???  It is important to clarify that, even if the definition of $k$-globally  (branch or tree) surviving degrees comes as a clear generalization of the $k$-surviving degrees, we got interested on it after reading Blass chapter in the handbook of set theory \cite{blass}. In the section of evasion and prediction, Blass introduces many different notions of the evasion number and settles a lot of them as known cardinal characteristics. Nevertheless, he mentions that a couple of the variations of the evasion numbers are not deeply studied. It turns out, as we will show in a later section, that the set theoretical analogue of a $k$-globally surviving degree is the dual of one of these not well studied evasion numbers.

\begin{proposition}\label{Tree not cover by branch}
There is a globally-branch surviving function that is not $3$-globally tree surviving. 
\end{proposition}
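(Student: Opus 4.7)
The plan is to build a computable 3-tree $S \subseteq \omega^{<\omega}$ whose ``third successor'' at level $i$ encodes the halting time of $\varphi_i(i)$, and then to apply the Baire category theorem on $[S]$ to extract a path $g \in [S]$ that is not a path through any computable $k$-branching tree. Concretely, letting $h_i$ denote the number of steps $\varphi_i(i)$ takes to halt (when it does), define $s \in S$ iff for every $i < |s|$, either $s(i) \in \{0, 1\}$, or $s(i) > 1$ and $\varphi_i(i)$ halts in exactly $s(i) - 1$ steps. Each of these conditions on an individual coordinate is decidable, so $S$ is computable, and each node at level $i$ has successors $\{0, 1\}$ when $\varphi_i(i)$ diverges and $\{0, 1, h_i + 1\}$ when it halts, so $S$ is a 3-tree. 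Any path $g \in [S]$ is then automatically not 3-globally tree surviving.

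The crux will be to show that no computable $k$-branching tree $T$ contains $S_\sigma := \{t \in S : \sigma \subseteq t\}$ for any fixed $\sigma \in S$. Assuming $T \supseteq S_\sigma$ is such a tree, for each $i \geq |\sigma|$ the node $\tau_i := \sigma \concat 0^{i - |\sigma|}$ lies in $S_\sigma$, hence in $T$; since $S_\sigma$ already gives $\tau_i$ at least two successors, the set $V_i$ of successors of $\tau_i$ in $T$ has size exactly $k$. When $\varphi_i(i)$ halts, $h_i + 1 \in V_i$, so $h_i \leq \max V_i - 1$; and since $T$ is computable, $V_i$ can be computed uniformly in $i$. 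This lets me decide halting: given $i$, compute $\max V_i$ and simulate $\varphi_i(i)$ for $\max V_i - 1$ steps, accepting iff it halts in that window. The resulting contradiction rules out any such $T$.

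To finish, I would invoke Baire category. The set $[S]$ is closed in $\omega^\omega$, hence Polish, and nonempty (it contains $0^\omega$). Let $(T_e)_{e \in \omega}$ enumerate all computable $k$-branching trees (for all $k \geq 1$). Each $[T_e] \cap [S]$ is closed in $[S]$, and if one had nonempty interior then for some $\sigma \in S$ we would have $\{g \in [S] : \sigma \subseteq g\} \subseteq [T_e]$; because every node of $S_\sigma$ extends to an infinite branch in $S$ (as $S$ has no dead ends), this would force $S_\sigma \subseteq T_e$, contradicting the previous paragraph. Hence $\bigcup_e ([T_e] \cap [S])$ is meager in $[S]$, and any $g$ in the comeager complement $[S] \setminus \bigcup_e [T_e]$ witnesses the proposition.

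The main obstacle is the halting-problem step: I have to arrange $S$ so that any containing computable $k$-branching tree can be made to supply a computable upper bound on $h_i$, uniformly in $i$. The construction above is tailored for exactly this, since the ``extra'' successor at level $i$ (when it exists) is forced to be $h_i + 1$ itself, which makes the decoding essentially automatic once $V_i$ is in hand. The remaining ingredients --- the Baire category argument and the passage from ``no $S_\sigma \subseteq T$'' to ``empty interior in $[S]$'' --- are standard.
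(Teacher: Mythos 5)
Your proof is correct, but it takes a genuinely different route from the paper's. The paper builds its computable $3$-tree $T$ by a direct, stage-by-stage diagonalization: it fixes a computable pairing of levels with pairs $\langle e,k\rangle$, and at each level $\langle e,k\rangle$ it watches $\varphi_e$ and, if $\varphi_e$ materializes as a $k$-branching tree containing the current node, arranges a third successor that $\varphi_e$ is then forbidden from having; the \emph{rightmost} path of the resulting tree is then globally branch surviving by construction. Your argument instead encodes the halting problem directly into the third-successor slot of $S$, so that for \emph{any} computable $k$-branching $T$ containing a cone $S_\sigma$, the (computable) successor sets $V_i$ in $T$ yield a uniform computable bound $\max V_i - 1$ on the halting time of $\varphi_i(i)$, a contradiction; the existence of a branch avoiding all computable branching trees is then extracted noneffectively via Baire category on $[S]$. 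The paper's approach has the advantage of pointing at a concrete ($\Delta^0_2$-definable) witness and of being the form that generalizes in the subsequent Theorem 3.5 (where one must control computability of the branch to get computable traceability); your approach is conceptually cleaner — a single, static tree plus a short recursion-theoretic reduction — but its use of Baire category makes it less suited to that later refinement. One small point worth tightening: your decision procedure searches for all $k$ elements of $V_i$; you should note explicitly that you already know $|V_i|=k$ (not $1$) because $\tau_i$ has at least two $S$-successors inside $T$, which is what makes that search terminating and hence $V_i$ genuinely computable rather than merely c.e.
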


\begin{proof} We will define a computable $3$-tree of $\omega^{<\omega}$ which is not covered by any computable $k$-branching tree. The right most path of this $3$-tree will be globally-branch surviving but, since it is a branch of a $3$-tree, it is not $3$-globally tree surviving.

We will define $T$ by stages, beginning with $T_0=\emptyset$ and obeying the following rules:\begin{enumerate}

\item If $p\in T_{s}$ then $p0\in T_{s+1}$.

\item If $p\in T_{s}$, then for each $i<s+1$ we will decide whether $p^{\frown}i=pi\in T$ at stage $s+1$.

\item We will fix a computable permutation that maps $\omega$ with all the pairs $\langle e, k\rangle$ with $k>2$. If $p\in T$, $|p|=n=\langle e, k\rangle$, the successors of $p$ will deal with $\varphi_{e}$ as it were a $k$-branching tree. In particular, if $\varphi_{e}$ is a $k$-branching tree then the right most successor of $p$ is not in it.
\end{enumerate}

(Strictly speaking, at a given stage we have a tree together with a finite set of forbidden nodes, but for simplicity we speak of just building a tree and making declarations.)

Assume that at stage $s$ we have $p\in T_{s}$, $\vert p\vert=n=\langle e, k\rangle$. There are now three cases:

\textbf{Case 1} No successor of $p$ is in $T_{s}$ other than perhaps $p0$.

If $\varphi_{e,s}$ does not look like a $k$-branching tree containing $p$, then we set $pi\not\in T$ for all $0<i\leq s$.
% so far or it is behaving like a $k$-branching tree but $p\notin \varphi_{e}$ we declare that $pi\notin T$ for all $0<i\leq s$.
Otherwise, we check whether $p0\in \varphi_{e,s}$; if it is not, we declare that $ps\notin T$.

If $p0\in \varphi_{e,s}$, we further check if $ps\in \varphi_{e,s}$. If it is then we declare $ps\notin T$; if $ps\notin \varphi_{e,s}$ then we declare that $ps\in T_{s+1}$.

\textbf{Case 2} There are exactly two successors of $p$ in $T_{s}$ (one of which is $p0$).

Being in this case means that, at some stage, $\varphi_{e}$ looked like a $k$-branching tree and that $p0\in \varphi_{e,s}$. If there are not exactly $k$-many successors of $p$ in $\varphi_{e,s}$ then we declare that $ps\notin T$; otherwise, we put $ps\in T_{s+1}$. Note that $ps\not\in \varphi_{e, s}$ by use constraints, so we are free to make this decision at this time.

%If there are exactly $k$ successors of $p$ in $\varphi_{e}$ and $ps\notin \varphi_{e,s}$ then $ps\in T_{s+1}$. If $ps\in \varphi_{e,s}$ then we declare $ps\notin T_{s+1}$.\todo{Arnie has a comment about this case.}

\textbf{Case 3} If there are three successors of $p$ in $T_{s}$ we declare $ps\notin T$.

The union $\bigcup_{s\in\omega} T_s$ is a computable $3$-tree whose right-most path  is not in any computable $k$-branching tree, and this finishes the proof.
\end{proof}

At a first glance, it may appear that the rightmost branch through the tree constructed above has significant computational power. Interestingly, this is not quite true:

%QUERY: CURRENT POINT???

\begin{theorem}\label{branch treaceable}
There is computably traceable degree that computes a globally branch surviving function that is not $3$-globally tree surviving.
\end{theorem}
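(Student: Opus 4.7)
I would adapt the forcing argument from Theorem \ref{surviving hierarchy} to the setting of $\omega^{<\omega}$, taking conditions to be pairs $(T, r)$ where $T$ is a perfect computable $3$-tree in $\omega^{<\omega}$ (with either $2$ or $3$ successors at each splitting node) and $r \in T$ is a stem, with refinement by shrinking the tree and extending the stem. The requirements are $R_{e,k}$ (the generic $A$ is not a branch of $\varphi_e$ when $\varphi_e$ is a $k$-branching tree, yielding globally branch surviving) and $P_e$ ($\varphi_e^A$ is partial or a branch of some computable $3$-tree in $\omega^{<\omega}$, simultaneously yielding ``not $3$-globally tree surviving'' for every $A$-computable function and computable traceability, since the $n$-th level of a $3$-tree has at most $3^n$ nodes, giving a uniform computable trace bound). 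Since each condition $T_s$ is itself a computable $3$-tree containing $A$, the generic itself also witnesses the ``not $3$-globally tree surviving'' property.

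For the initial condition I would take a strengthening of the tree from Proposition \ref{Tree not cover by branch} in which each pair $(e,k)$ is diagonalized against on a \emph{dense} set of nodes rather than only at level $\langle e, k\rangle$: using a computable labeling of nodes by pairs $(e,k)$ whose preimage of each pair is cofinal in $T_0$, we secure at each such node a successor outside $\varphi_e$. This gives $T_0$ an ``escape density'' property: above every node of $T_0$, for every $(e,k)$, there is a descendant not in $\varphi_e$ whenever $\varphi_e$ is $k$-branching. I would require each subsequent condition to preserve this escape density. Handling $R_{e,k}$ is then immediate: search above the current stem in the current tree for an escape descendant and extend the stem there. Handling $P_e$ parallels Theorem \ref{surviving hierarchy}: either find $t \in T_s$ extending the current stem and $n \in \omega$ with $\varphi_e^{t'}(n)\uparrow$ for all $t' \supseteq t$ in $T_s$ (and restrict to the subtree above $t$), or inductively choose, above each splitting node $t_\sigma$ of the refined tree, extensions $t_{\sigma 0}, t_{\sigma 1}, t_{\sigma 2}$ on which $\varphi_e$ differentiates up to some length $n_\sigma$, simultaneously building the trace tree $U_e$ with levels $u_{\sigma i} = \varphi_e^{t_{\sigma i}}\upharpoonright n_\sigma$.

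The main obstacle is verifying that the $P_e$-refinement preserves escape density for subsequent $R_{e',k'}$-requirements: pruning away branches of $T_s$ to force $\varphi_e$ to differentiate could eliminate the escape descendants relied on later. I would address this by imposing an additional constraint in the search for the differentiating $t_{\sigma i}$: each $t_{\sigma i}$ is required to lie above an escape node in $T_s$ for each of finitely many pending pairs $(e', k')$. Escape density of $T_s$ provides infinitely many such escape nodes above any given node, while the non-divergence hypothesis of the $P_e$ case provides infinitely many extensions forcing $\varphi_e$ to take new values; both conditions are dense-open inside $T_s$, so their intersection is nonempty and the required $t_{\sigma i}$ can be found computably. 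The result $A \in \bigcap_s [T_s]$ is then a branch of the computable $3$-tree $T_0$, escapes every computable $k$-branching tree, and sits in a computably traceable degree whose every total $A$-computable function is a branch of a computable $3$-tree.
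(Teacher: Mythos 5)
Your proposal matches the paper's proof in all essential respects. The paper also starts from the tree of Proposition \ref{Tree not cover by branch}, modified so that each pair $(e,k)$ is handled on a set of levels that is cofinal along every branch; it formalizes your ``escape density'' as a labelling function $g:T\to\omega$, maintained as part of the forcing condition $\langle p,T,g\rangle$, with the invariant that every label occurs above every node. Even stages handle the $R$-requirements by walking the stem up to a node with the right label and taking the escaping successor, and odd stages handle the $P$-requirements by the same split-and-differentiate pruning you describe, producing a $3$-tree $U_e$ whose levels bound the trace by $3^n$. Your observation that $A\in[T_0]$ immediately witnesses ``not $3$-globally tree surviving'' is exactly how the paper gets that half of the conclusion.

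The one place where your write-up is looser than the paper's is the preservation step. You say each differentiating extension $t_{\sigma i}$ should ``lie above an escape node in $T_s$ for each of finitely many pending pairs $(e',k')$,'' but escape density must be re-established for \emph{all} pairs, cofinally above \emph{every} node of the pruned tree. The paper handles this by explicitly re-labelling: when it picks the extension $q'$ above a new splitting node, it requires the sequence of nonzero $g$-labels along the path to $q'$ to be an initial segment of the fixed sequence $\langle 1,1,2,1,2,3,1,2,3,4,\dots\rangle$, in which every index appears infinitely often. That explicit bookkeeping is what guarantees that the ``pending pairs'' rotate through all of $\omega$ as you go deeper, so escape density genuinely survives the pruning. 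Your proposal needs to state this rotation, or equivalently carry the labelling $g$ as part of the condition as the paper does; with that made explicit, the argument goes through as you describe.
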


\begin{proof}
%From the proof of last lemma we will use the same tree with a slight modification: instead of having a computable bijection between $\omega$ and the points $\langle e, k\rangle$ with $k\geq 3$, we can have a function where each  $\langle e, k\rangle$ appears infinitely often and we can localize them in a computable way, for example,  $\langle 1, 1, 2, 1, 2, 3, 1, 2, 3, 4, ....\rangle$. Here we can read that the first $\langle e, k\rangle$ (we will called it $\langle e_{1}, k_{1}\rangle$) will be dealt with at level $0$, $1$, $3$, $6$.....; $\langle e_{2}, k_{2}\rangle$ at levels $2$, $4$, $7$,...; and so on. We do not have $\langle e_{0}, k_{0}\rangle$.

We will use the same tree as in the proof of Proposition \ref{Tree not cover by branch}, with a slight modification. Having as above an effective enumeration $\{\langle e_i, k_i\rangle: i>0\}$ of $\omega\times\omega_{\ge 3}$ --- note that we do not include a $0$th term, for notational convenience below --- we will fix a computable sequence in which every natural number $>0$ occurs infinitely often; we use $$1,1,2,1,2,3,1, 2,3, 4, ...$$ The terms in this sequence tell us what levels of our tree will deal with each pair in $\omega\times\omega_{\ge 3}$. For example, $\langle e_1, k_1\rangle$ will be dealt with at levels $0, 1, 3, ...$ and $\langle e_2, k_2\rangle$ at levels $2, 4, 7, ...$.

%QUERY???: a couple instances of "branching" instead of "3-branching," below.

We build a sequence of triples $\langle p, T, g\rangle$ with the following properties:\begin{itemize}
\item $p\in \omega^{<\omega}$.
\item $T$ is a computable $3$-tree that is not covered by any computable $3$-branching tree, with $p\in T$ (indeed we may assume that $p$ is the stem of $T$).
%QUERY???: do we want $p\in T$?
\item $g:T\rightarrow \omega$ is such that: for all $\sigma \in T, n\in \omega$, there is an extension $\tau\in T$ of $\sigma$ with $g(\tau)=n$. 
\item Finally, if $\varphi_{e}$ is a $k$-branching tree, then there are infinitely many nodes in $T$ with a successor not in $\varphi_e$.
\end{itemize}

Here, $p$ is the initial segment of the function we are constructing, $T$ is the tree of possible future extensions (so the real we produce is a branch of $T$), and if $g(\sigma)>0$ then the successors of $\sigma$ deal with $\varphi_{e_{g(\sigma)}}$ as if it were a $k_{g(\sigma)}$-tree (where $\{\langle e_i, k_i\rangle: i>0\}$ is as above) --- that is, the labelling function $g$ assigns tasks to each node of the tree.

%QUERY???: Serious edit, original reads: The way to read this forcing conditions is the following: $p$ is the initial segment of the function that we are constructing, $T$ are the only values that we can take for extension $p$ (so the degree is a branch of $T$) and if $\sigma\in T$ with $g(\sigma)\neq 0$ then the successors of $\sigma$ deal with $\varphi_{e_{g(\sigma)}}$ as it were a $k_{g(\sigma)}$-tree (in other words, they deal with $\langle e_{g(\sigma)}, k_{g(\sigma)}\rangle$).

To start, fix (noneffectively) an enumeration of all computable $3$-branching trees. During the construction of $A$ we want to satisfy two families of requirements:\begin{itemize}

\item $R_{e}$: If $\varphi_{e}$ is a $k$-branching tree for any $k\in \omega$ then $A$ is not a branch of it. This will make $A$ a globally branch surviving degree.

\item $P_{e}$: If $\varphi_{e}^{A}$ is total then it goes through some computable trace bounded by $f(n)=3^{n}$. This will make $A$ computably traceable.
\end{itemize}

We will start our construction with $\langle p_{0}, T_{0}, g_{0}\rangle=\langle \emptyset, T, g_{0}\rangle$ with $T$ as described in the first paragraph and $g_{0}$ being constant at every level, and mapping nodes on the $i$th level to the $i$th term of the sequence $$\langle 1, 1, 2, 1, 2, 3, 1, 2, 3, 4, ....\rangle.$$ Our construction breaks into even and odd stages, handling the $R$- and $P$-requirements respectively. The former are easily satisfied, while the latter require a construction.

{\bf Even stages.} At stage $s=2e$ we have $\langle p_{s}, T_{s}, g_{s}\rangle$. Given the $e$-th computable $3$-branching tree, we look for an extension of $p_{s}$ that avoids it. To do this, we look for $r$ such that the $e$-th branching tree is a $k_{r}$ branching tree and is describe by $\varphi_{e_{r}}$ (we are using $\langle e_{r}, k_{r}\rangle$). Now, we look for an extension of $p_{s}$ in $T_{s}$, called it $\tau$, such that $g_{s}(\tau)=r$, we focus on the successor of $\tau$ that avoids $\varphi_{e_{r}}$. That successor will be $p_{s+1}$. We define $T_{s+1}$ to be the subtree of $T_{s}$ that extends $p_{s+1}$ and we let $g_{s+1}$ to be $0$ for all the initial segments of $p_{s+1}$ and be the same as $g_{s}$ for the other members of $T_{s+1}$.

{\bf Odd stages.} At $s=2e+1$ we have $\langle p_{s}, T_{s}, g_{s}\rangle$. If there is $\tau\in T_{s}$ such that $\varphi^{A}_{e}$ is not total for all branches $A$ of $T_{s}$ extending $\tau$ then we define $p_{s+1}=\tau$ and we make $T_{s+1}$ and $g_{s+1}$ as in stage $2e$. If there is not such an extension, we define $p_{s+1}$ to be the first node extending $p_{s}$ such that $g_{s}(p_{s+1})=1$. To define $T_{s+1}$, we want to prune $T_s$ in such a way that $\varphi^A_e$ is in a computable trace bounded by $3^n$ whenever $A$ is a branch, and $g_{s+1}$ should then be defined accordingly.

Specifically, under the assumption above we define $T_{s+1}$ and $g_{s+1}$ by the following steps:

\begin{enumerate}
\item At every stage $t$ there is at most one node entering $T_{s+1,t}$ that is a successor of a node with $g_{s+1}\neq 0$.

\item At every stage $t$, if we declare that $\sigma\in T_{s}$ will belong to $T_{s+1}$ and $\sigma$ is not the successor of a node with $g_{s+1}\neq 0$ then $\sigma\in T_{s+1,t+1}$.

\item We will fix an enumeration of $\omega^{<\omega}$, after adding the nodes of the rule above, we will give an opportunity to the nodes that are successors of a node with $g_{s+1}\neq 0$ by the order of the enumeration.  

\item At stage $t=0$, $p_{s+1}\in T_{s+1,0}$, $g_{s+1,0}(\sigma)=0$ for all $\sigma \prec p_{s+1}$ and $g_{s+1}(p_{s+1})=g_{s}(p_{s+1})=1$. Here we start the next stage.

\item  If $q$ just entered $T_{s+1,t}$ and it is a successor of a node with $g_{s+1}\neq 0$, we will look for $\tau_{\sigma}\in T_{s}$ extending $\sigma$, for each leaf of $T_{s+1,t}$, and $\tau\in T_{s}$ extending $q$ such that there is $n,m\in \omega$ with $m>|q|$, $n<m$ and $\varphi_{e}^{\tau}(n)\neq \varphi_{e}^{\tau_{\sigma}}(n)$: Also, we want $\varphi_{e}^{\tau}$ and $\varphi_{e}^{\tau_{\sigma}}$ to converge for the whole interval $[0,m]$. Finally, $g_{s}(\tau_{\sigma})=g_{s+1}(\sigma)$.

\item For every leaf $\sigma$ of $T_{s+1, t}$ we declare that the nodes between $\sigma$ and $\tau_{\sigma}$ will be in $T_{s+1}$, that there is no split between these nodes, and that $g_{s+1}(\rho)=0$ for such a node. We also change the value of $g_{s+1}(\sigma)$ to $0$ and set $g_{s+1}(\tau_{\sigma})=g_{s}(\tau_{\sigma})$.

\item Given $\tau$ from step 5 (the one extending $q$), we look at the subsequence $\langle g_{s+1} (q\restrict i_{m})\rangle$ made by all the nonzero values of $\langle g_{s+1} (q\restrict i) :i<|q| \rangle$  and we look for an extension of $\tau$, called it $q'$, in $T_{s}$ such that $\langle g_{s+1} (q\restrict i_{m})\rangle\frown g_{s}(q')$ is an initial segment of $\langle 1, 1,2,1,2,3,... \rangle$.

\item Given $q$ and $q'$ as the rules above, we declare that $q'\in T_{s+1, t}$ as well as all its successors in $T_{s}$ and initial segments; we also declare that $g_{s+1}(q')=g_{s}(q')\neq 0$; that there are no splits in $T_{s+1}$ between $q$ and $q'$, and that $g_{s+1}(\sigma)=0$ for all $q\preceq \sigma \prec q'$. Now we start the next stage.
\end{enumerate}

This construction produces a triple $\langle p_{s+1}, T_{s+1}, g_{s+1}\rangle$ with the desired form. Moreover, the function $g_{s+1}$ changes value from $g_s$ at each node at most once and will not change once a successor of the node enters $T_{s+1}$. Since all nodes of $T_{s+1}$ have a successor, $g_{s+1}$ is computable. Furthermore, by step $5$ we know that $\varphi_e^A$ is total whenever $A$ is a branch of $T_{s+1}$. So to complete the proof we just need to check that there is a single computable trace capturing all this functions. This is provided by $$U_{e}=\{\varphi_{e}^{\sigma}\restrict n : n\in \omega, \sigma\in T_{s+1}\}.$$ That is, we claim that the $n$th level of $U_e$ has size at most $3^n$. This is because $T_{s+1}$ is a $3$-tree, so the only way this could fail would be if there were splitting nodes in $T_{s+1}$ which saw no new convergence of $\varphi_e$ (since then by waiting for more splittings further up the tree, we could produce more than $3^n$ values of the computation and hence more than $3^n$-many nodes on $U_e$ of height $n$). However, by construction we generate new values of $\varphi_e^\sigma$ exactly when we split, so this cannot happen.

%\textbf{Ideas:} $T_{s+1}$ is $3$-tree and, by the construction, everytime we add a new thing we make it different to all the others, so, even if they split in a single level, that level can at most split $3^{n}$.

So the degree of the function $A=\bigcup_{n\in \omega}p_{n}$ is as we desired. 

\end{proof}

\begin{question}\label{branch vs tree}
Is there a globally-branch surviving degree that is not $3$-globally tree surviving? 
\end{question}

\begin{question}
Is it true that $k,s\geq 3$ there is a $k$-globally branch surviving degree that is not $s$-globally tree surviving?
\end{question}

Clearly, a globally-branch surviving degree is a $k$-globally branch surviving degree. Also, a globally-tree surviving degree is a $k$-globally tree surviving degree. To really show that this degrees make a hierarchy, we need to make the following observations.

\begin{lemma}\label{tree hierarchy}
Given $k\geq s\geq 2$, a $k$-globally tree surviving degree is also an $s$-globally tree surviving degree. 
\end{lemma}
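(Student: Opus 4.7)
The plan is to reduce the lemma to the set-theoretic containment of the relevant classes of trees. Observing the definition in Section 2, a $k$-tree is a tree in $\omega^{<\omega}$ in which every node has between $1$ and $k$ successors. Since $s \leq k$, the defining bound of an $s$-tree (every node has at most $s$ successors) is strictly stronger than that of a $k$-tree. Hence every $s$-tree is a $k$-tree, and therefore every computable $s$-tree is a computable $k$-tree.

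Given this, the proof is a one-line contrapositive. Suppose $g \in \omega^\omega$ is $k$-globally tree surviving, i.e., $g$ is not a path through any computable $k$-tree. If $g$ were a path through some computable $s$-tree $T$, then by the observation above $T$ would also be a computable $k$-tree, contradicting the hypothesis. Hence $g$ is $s$-globally tree surviving. The statement for Turing degrees is immediate: if a degree $\mathbf{b}$ computes a $k$-globally tree surviving function $g$, then the same function $g$ witnesses that $\mathbf{b}$ is $s$-globally tree surviving.

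There is no real obstacle here; the only thing to verify is that the direction of the implication matches the direction of the inclusion of tree classes, and it does (smaller branching bound yields a smaller class of admissible trees, and hence a weaker survival property). The corresponding statement for $k$-globally branch surviving degrees is a genuinely separate matter, since a $k$-branching tree is not in general an $s$-branching tree when $s < k$; but that is outside the scope of the present lemma.
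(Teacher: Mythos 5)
Your proof is correct and follows exactly the same reasoning as the paper: since $s \leq k$, every $s$-tree is a $k$-tree, so surviving all computable $k$-trees trivially implies surviving all computable $s$-trees. The paper states this in one line; you spell out the contrapositive and the lift to Turing degrees, but the underlying argument is identical.
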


\begin{proof}
By definition, an $s$-tree is also a $k$-tree, so, if you survive all $k$-trees, in particular, you survive all $s$ trees.
\end{proof}

\begin{lemma}\label{branch hierarchy}
Given $k\geq s\geq 2$, a $k$-globally branch surviving degree is also an $s$-globally branch surviving degree. 
\end{lemma}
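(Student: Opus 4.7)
My strategy will be to prove the contrapositive of the contrapositive, by the usual trick: if $f \in \omega^\omega$ lies on a computable $s$-branching tree, then $f$ also lies on a computable $k$-branching tree. From this it follows that any $k$-globally branch surviving function $f$ computed by the degree --- which by hypothesis avoids every computable $k$-branching tree --- also avoids every computable $s$-branching tree, so the same $f$ witnesses that the degree is $s$-globally branch surviving.

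Fix a computable $s$-branching tree $T \subseteq \omega^{<\omega}$ containing $f$ as a branch. I would construct a computable $k$-branching tree $T' \supseteq T$ by padding $T$: at each $s$-branching node $\sigma \in T$ add $k - s$ extra ``dummy'' successors, each extended as a $1$-branching line of zeros, and at each $1$-branching node add nothing. Then every node of $T'$ has either $1$ or $k$ successors, and $T \subseteq T'$ so $[T] \subseteq [T']$, giving $f \in [T']$.

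The hard part is making $T'$ computable, since the branching-count function $c_T \colon T \to \{1,s\}$ is in general only $\Delta^0_2$ rather than computable. To get around this I would use a stage-dependent definition of the padding: declare $m$ to be a padding value for $\sigma$ iff $\sigma^\frown m \notin T$ and $|\{n \leq |\sigma| + m : \sigma^\frown n \in T\}| \geq s$, and let $\mathrm{PAD}(\sigma)$ be the first $k - s$ values of $m$ (in natural order) satisfying both conditions. Each condition is decidable in finite time, so
$$T' \;=\; T \;\cup\; \{\sigma^\frown m^\frown 0^j : \sigma \in T,\; m \in \mathrm{PAD}(\sigma),\; j \geq 0\}$$
is computable. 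When $c_T(\sigma) = 1$ the visibility condition never holds, so $\mathrm{PAD}(\sigma) = \emptyset$ and $c_{T'}(\sigma) = 1$; when $c_T(\sigma) = s$, the $s$ successors of $\sigma$ in $T$ have some maximum value $N_\sigma$, and then for every $m \geq N_\sigma - |\sigma|$ with $\sigma^\frown m \notin T$ (and there are infinitely many such $m$) the conditions are met, so exactly $k - s$ padding values are found and $c_{T'}(\sigma) = k$. Verifying that $T'$ is closed under initial segments, and that padding-line nodes $\sigma^\frown m^\frown 0^j$ each have exactly one successor in $T'$, is then a routine check based on the uniqueness of the decomposition of any $\tau \notin T$ as $\sigma^\frown m^\frown 0^j$ with $\sigma$ the longest initial segment of $\tau$ lying in $T$.
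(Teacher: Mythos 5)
Your argument is correct and follows essentially the same route as the paper: both reduce the lemma to the fact that a computable $s$-branching subtree of $\omega^{<\omega}$ can be uniformly extended to a computable $k$-branching tree containing it. The paper simply asserts this fact; your proof usefully supplies the missing construction, correctly identifying and handling the subtlety that the branching-count function of a computable tree need not itself be computable.
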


\begin{proof}
Given $k\geq s$ notice that if $A$ is not an $s$-globally surviving degree then all the total functions that it computes are the branch of a computable $s$-branching tree. Now, notice that we can make a computable $s$-branching tree of $\omega^{\omega}$ into a $k$-branching tree in a uniform way, so all the total functions that $A$ computes are a branch of a $k$-branching tree.

This shows that $A$ is not $k$-globally surviving.
\end{proof}

%\begin{proof}
%By theorem \ref{branch vs tree}, there is a globally branch surviving degree that is not $3$-globally 
%\end{proof}

Note that contrary to what the name suggests, being $k$-globally surviving is weaker than being $k$-surviving: a function that is $k$-surviving is also $k$-globally surviving since $(k+1)^{<\omega}\subseteq (\omega)^{<\omega}$ and the fact that if $T\subseteq (\omega)^{<\omega}$ is a $k$-branching tree then $T\cap (k+1)^{<\omega}$ is a $k$-tree. We have:

\begin{lemma}
If $A$ is a $k$-surviving degree then it is a $k$-globally surviving degree.
\end{lemma}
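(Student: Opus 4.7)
The plan is to show that any $k$-surviving $f\in (k+1)^{\omega}$ computed by $A$ is itself $k$-globally branch surviving, via contraposition. Suppose for contradiction there is a computable $k$-branching tree $T\subseteq \omega^{<\omega}$ with $f\in [T]$; I will manufacture from $T$ a computable $k$-branching subtree of $(k+1)^{<\omega}$ that still contains $f$, contradicting the $k$-survival of $f$.

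First I would form $T':=T\cap (k+1)^{<\omega}$. Since both $T$ and $(k+1)^{<\omega}$ are computable and closed under initial segments, so is $T'$; since $f\in (k+1)^{\omega}$, every initial segment of $f$ lies in $T'$; and since restriction can only drop successors, every node of $T'$ has at most $k$ successors in $T'$. The only defect is that some $\sigma\in T'$ may have $0$ successors in $T'$ --- precisely when every $T$-successor of $\sigma$ used a label $>k$ --- so $T'$ is not quite a $k$-tree in the strict sense of Section 2.

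Next I would pad $T'$ into a genuine $k$-branching subtree $T''\subseteq (k+1)^{<\omega}$ containing $T'$ (and hence $f$). For each $\sigma\in T'$, let $c(\sigma)$ be the number of successors of $\sigma$ in $T'$. If $c(\sigma)\in\{1,k\}$, do nothing; if $c(\sigma)\in\{2,\ldots,k-1\}$, adjoin the least $k-c(\sigma)$ strings $\sigma^{\frown}i$ (with $i\leq k$) not already present in $T'$, bringing the count to $k$; if $c(\sigma)=0$, adjoin the single successor $\sigma^{\frown}0$. Below each newly adjoined node, attach a single infinite ray of zeros, so that every such node and each of its descendants on the ray has exactly one successor. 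The resulting tree $T''$ is computable, lies in $(k+1)^{<\omega}$, has every node with exactly $1$ or $k$ successors, contains $T'$, and therefore contains $f$ as a path --- contradicting the $k$-survival of $f$. This is essentially the content of the Section 2 remark that inside $n^{<\omega}$ (finite $n$) a computable $k$-tree extends effectively to a $k$-branching tree containing it.

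The only point needing a moment's care is the leaf case $c(\sigma)=0$, where $T'$ formally fails to be a $k$-tree; the padding rule above dispatches this uniformly. There is no deeper obstacle --- the construction of $T''$ is local and uniform in $\sigma$, and $(k+1)^{<\omega}$ supplies $k+1$ candidate successors at each node, always enough room to reach the required branching factor $k$.
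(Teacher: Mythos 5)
Your proof is correct and follows essentially the same route as the paper's one-line justification: restrict the global $k$-branching tree $T\subseteq\omega^{<\omega}$ to $T\cap(k+1)^{<\omega}$ and then, using the finite-alphabet padding remark from the preamble of Section 2, inflate it effectively to a $k$-branching subtree of $(k+1)^{<\omega}$ still containing $f$. If anything you are more careful than the paper, which simply asserts that $T\cap(k+1)^{<\omega}$ is a $k$-tree and glosses over the possibility of leaves (nodes all of whose $T$-successors carry labels $>k$); your explicit padding rule for the $c(\sigma)=0$ case closes exactly that gap.
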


The converse fails badly, however:

\begin{theorem}\label{globally surviving vs surviving}
There is a Turing degree $A$ that is globally tree surviving but not $2$-surviving.
\end{theorem}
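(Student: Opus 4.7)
I would construct $A$ by a forcing argument. My conditions are computable trees $T\subseteq \omega^{<\omega}$ such that every node of $T$ has an extension in $T$ with infinitely many immediate successors (``superperfect'' trees). The goal is to satisfy, for each $e,k\in\omega$:
\begin{itemize}
\item $R_{e,k}$: if $\varphi_e$ codes a $k$-tree, then $A$ is not a branch of $\varphi_e$;
\item $P_e$: either $\varphi_e^A\notin 3^\omega$, or $\varphi_e^A$ is a path through a pre-specified computable $2$-branching subtree $U_e\subseteq 3^{<\omega}$.
\end{itemize}
Satisfying all $R_{e,k}$ makes $A$ globally tree surviving, and satisfying all $P_e$ makes $A$ not $2$-surviving. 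Given a condition $T$ and the goal $R_{e,k}$, I can find a node $t\in T$ above the stem with more than $k$ successors in $T$; since $\varphi_e$ is a $k$-tree, at least one successor of $t$ in $T$ is not in $\varphi_e$, and the subtree above it remains superperfect, so the $R$-requirements are easy.

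The crux is a lemma handling $P_e$: given a superperfect computable $T$ and index $e$, I can computably find a superperfect subtree $T'\subseteq T$ and a computable $2$-branching tree $U_e\subseteq 3^{<\omega}$ such that either $\varphi_e^A\notin 3^\omega$ for every $A\in[T']$, or $\varphi_e^A\in [U_e]$ for every $A\in[T']$. The easy subcases are when some $(t,n)$ with $t\in T$ witnesses $\varphi_e^{t'}(n)\uparrow$ for every $t'\supseteq t$ in $T$, or witnesses $\varphi_e^t(n)\downarrow \ge 3$; in either case, restricting $T$ above $t$ handles things. The remaining ``dense'' case is: for every $(t,n)$ with $t\in T$, some $t'\supseteq t$ in $T$ has $\varphi_e^{t'}(n)\downarrow\in\{0,1,2\}$.

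In the dense case I would build $T'$ and $U_e$ together by a stage-wise fusion paralleling the construction in the proof of Theorem \ref{surviving hierarchy}, but crucially exploiting that the codomain is the finite set $\{0,1,2\}$. By pigeonhole on $\{0,1,2\}$, among the infinitely many extensions of any node in $T$, infinitely many produce the same initial segment of $\varphi_e$-values, so I can preserve infinite branching in $T'$ while $U_e$ extends linearly by a single ``popular'' value. At designated $U_e$-branching levels I select at most $2$ of the possibly $3$ available values and split $T'$ into two still-superperfect subtrees corresponding to these values; the third value, if present, is discarded by pruning $T'$. This keeps $U_e$ a $2$-branching tree in $3^{<\omega}$.

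The main obstacle is effectivity: the combinatorial assertions that underlie the above (e.g.\ ``value $v$ is attained by infinitely many extensions of $t$'') are typically $\Pi_2$, while $T'$ and $U_e$ must be uniformly computable. I would resolve this by a priority-style approximation: tentatively commit to choices based on finite evidence and revise as needed, leveraging superperfection of $T$ together with the dense-case hypothesis to guarantee that commitments are eventually confirmed by the computable enumeration of $T$ and $\varphi_e$. Iterating the lemma across all requirements yields a descending sequence of superperfect conditions whose stems converge to the desired $A$.
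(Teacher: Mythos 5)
Your plan matches the paper's in broad outline: force with a suitable class of computable trees with explicit stems, split the requirements into $R_{e,k}$ (escape every computable $k$-tree) and $P_e$ (force $\varphi_e^A$ to be partial, to leave $3^\omega$, or to lie on a computable $2$-branching subtree of $3^{<\omega}$), and in the ``dense case'' of $P_e$ use the finiteness of $\{0,1,2\}$ to compress $\varphi_e$-values into a $2$-tree. But your choice of \emph{superperfect} trees as conditions introduces a gap that your last paragraph acknowledges without actually closing. To keep $T'$ superperfect while $U_e$ stays linear below a branching level, you must exhibit a node of $T'$ with \emph{infinitely many} immediate successors, each extending to a $\varphi_e$-agreeing segment; identifying such an infinite homogeneous family is $\Pi^0_2$, not $\Sigma^0_1$, and it is not clear how ``tentatively commit and revise'' produces a fixed computable $T'$ and $U_e$. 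If you commit nodes $\sigma_0,\ldots,\sigma_n$ to $T'$ as same-valued successors and the value later dries up, you cannot retract them from a computable tree you have already decided on, and restarting the infinite-branching witness further up must be shown to terminate --- this is exactly the point where a priority argument needs an injury/recovery analysis that you have not supplied.

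The paper sidesteps this entirely by replacing superperfect trees with \emph{accelerating} trees: the $n$-th splitting node need only have more than $n+2$ immediate successors, so branching grows unboundedly but is finite at each stage. In the dense case one then starts from a split with $3^n$ extensions on which $\varphi_e$ converges (a bounded, hence computable, search), and uses the finite pigeonhole lemma --- any $3^n$ functions into $\{0,1,2\}$ contain $n$ whose restrictions form a $2$-tree --- to prune down to $n$ successors at level $n$ while keeping $U_e$ a $2$-tree. Every step is a $\Sigma^0_1$ search whose success is guaranteed by the dense-case hypothesis, so $T_{s+1}$ and $U_e$ are genuinely computable with no priority machinery. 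I would recommend either switching to accelerating (or similarly ``finitely fattening'') conditions and running the bounded pigeonhole, or else supplying a full injury argument showing your superperfect construction stabilizes; as written, the effectivity of the dense case is an open hole.
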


\begin{proof}
As before, we will do forcing with stems and trees, but this time we will use accelerating subtrees of $\omega^{<\omega}$.

\begin{definition}\label{acctreecompdef} An {\em accelerating tree } is a subtree $T\subseteq\omega^{<\omega}$ such that if $\sigma\in T$ is a splitting node with $n$ splitting initial segments, then $\sigma$ has more than $n+2$ immediate successors.
\end{definition}

We will force with conditions of the form $\langle p, T\rangle$ were $p\in \omega^{<\omega}$ and $T$ is a computable accelerating subtree of $\omega^{<\omega}$ extending $p$.

We will construct $A\in \omega^{<\omega}$ with the following two requirements:\begin{itemize}

\item $R_{e,k}$: $A$ is not a branch of the $e$-th computable $k$-subtree of $\omega^{<\omega}$. This will make $A$ a globally tree surviving degree.

\item $P_{e}$: $\varphi_{e}^{A}$ is either not total or there is $n$ such that $\varphi_{e}^{A}(n)\geq 3$ or there is a computable $2$-branching tree of $3^{<\omega}$ such that $\varphi_{e}^{A}$ is a branch of it. This will make $A$ not a $2$-surviving degree.
\end{itemize}

We will set $\langle p_{0}, T_{0}\rangle=\langle \emptyset, \omega^{<\omega}\rangle$.

At stage $s=2\langle e, k\rangle$, if $U_{e}$ is the $e$-th computable $k$-branching subtree of $\omega^{<\omega}$ then we look for an extension of $p_{s}$ that has at least $k+1$ successors and we define $p_{s+1}$ to be the successor that is not in $U_{e}$. We define $T_{s+1}$ to be the subtree of $T_{s}$ extending $p_{s+1}$. Since $T_{s}$ is computable, given $p_{s+1}$, $T_{s+1}$ is computable. 

At stage $s=2e+1$ we have four cases:

\textbf{Case 1} If there is $\sigma\in T_{s}$ extending $p_{s}$ such that $\varphi_{e}^{\sigma}$ is not total, then let $p_{s+1}=\sigma$ and define $T_{s+1}$ to be the subtree of $T_{s}$ extending $p_{s+1}$. Here we satisfy $P_{e}$ by avoiding totality.

\textbf{Case 2} If there is $\sigma\in T_{s}$ extending $p_{s}$ and $n\in \omega$ such that $\varphi_{e}^{\tau}(n)\geq 3$, then let $p_{s+1}=\sigma$ and define $T_{s+1}$ to be the subtree of $T_{s}$ extending $p_{s+1}$. This satisfy $P_{e}$.

\textbf{Case 3} If there is $\sigma\in T_{s}$ extending $p_{s}$ such that there a no $\tau_{1},\tau_{2}\in T_{s}$ extending $\sigma$ such that $\varphi_{e}^{\tau_{1}}\neq \varphi_{e}^{\tau_{2}}$. In this case we will satisfy $P_{e}$ by the fact that $\varphi_{e}^{A}$ with $A$ extending $\sigma$ is computable if it is total.

\textbf{Case 4} For this case, we need that the other three cases are not happening. We will define $p_{s+1}=p_{s}$ and we will prune $T_{s}$.

We will define this prune by levels, here nodes at level $n$ will have exactly $n$ splits before them. Furthermore, during the prune we will define a $U_{e}$ a $2$-tree (remember that a $2$-tree and a $2$-branching tree are the same) such that for all the branches $A$ of $T_{s+1}$, $\varphi_{e}^{A}$ is a branch of $U_{e}$.

At level $0$ we will have a unique node: $p_{s+1}$. Furthermore, we will add $\emptyset$ to $U_{e}$.

Now, assume that $\tau\in T_{s}$ is a node in level $n-1$ with $n\geq 1$. We know that there are exactly $n-1$ splits before $\tau$ and that, in order to make $T_{s+1}$ accelerating, the next split should have at least $n$ nodes.

To do this we will take $\sigma\in T_{s}$ extending $\tau$ that has at least $3^{n}$ successors $\tau_{0}^{0},...,\tau_{3^{n}-1}^{0}$ in $T_{s}$. We will look for $m\in \omega$, and $\sigma_{i}\in T_{s}$ extending $\tau_{i}^{0}$ such that $\varphi_{e}^{\sigma_{i}}(t)\downarrow$ for $t<m$ and that there are $i_{0}, j_{0}<3^{n}$ such that $\varphi_{e}^{\sigma_{i_{0}}}\restrict m\neq \varphi_{e}^{\sigma_{j_{0}}}\restrict m$.

Let $m_{0}<m$ be the minimal number such that there are $i.j<3^{n}$ with $\varphi_{e}^{\sigma_{i}}(m_{0})\neq \varphi_{e}^{\sigma_{j}}(m_{0})$. Since we know that  $\varphi_{e}^{\sigma_{i}}(m_{0})<3$, we have that there is $k_{0}<3$ with at least $3^{n-1}$ $\sigma_{i}$ such that $\varphi_{e}^{\sigma_{i}}(m_{0})=k_{0}$.

We will define $\tau'_{0}$ to be one of the $\sigma_{i}$ such that $\varphi_{e}^{\tau'_{0}}(m_{0})\neq k_{0}$ and we will define $\tau_{0}^{1},..., \tau_{3^{n-1}}^{1}$ to be $3^{n-1}$ of the $\sigma_{i}$ with $\varphi_{e}^{\sigma_{i}}(m_{0})=k_{0}$.

To clarify, at this moment we have $\tau'_{0}\in T_{s}$ (that is a candidate to be a member of the n-th level) and $3^{n-1}$ nodes of $T_{s}$, $\tau_{0}^{1},..., \tau_{3^{n-1}}^{1}$, such that for $i,j<3^{n-1}$ $ \varphi_{e}^{\tau_{i}^{1}}\restrict (m_{0}+1)= \varphi_{e}^{\tau_{j}^{1}}\restrict (m_{0}+1)$, $\varphi_{e}^{\tau_{i}^{1}}(m_{0})\neq \varphi_{e}^{p'_{0}}(m_{0})$ but $\varphi_{e}^{\tau_{i}^{1}}\restrict m_{0}= \varphi_{e}^{\tau'_{0}}\restrict m_{0}$.

We can repeat the process to get $m_{1}\in \omega$, $m_{1}>m_{0}$, $\tau'_{1}\in T_{s}$ and $\tau_{0}^{2},..., \tau_{3^{n-2}}^{2}\in T_{s}$ all of them extending one of the nodes $\tau_{j}^{1}$ and have similar properties as the above paragraph. In other words: for $i,j<3^{n-2}$ $ \varphi_{e}^{\tau_{i}^{2}}\restrict (m_{1}+1)= \varphi_{e}^{\tau_{j}^{2}}\restrict (m_{1}+1)$, $\varphi_{e}^{\tau_{i}^{2}}(m_{1})\neq \varphi_{e}^{p'_{1}}(m_{1})$ but $\varphi_{e}^{\tau_{i}^{2}}\restrict m_{1}= \varphi_{e}^{\tau'_{1}}\restrict m_{1}$.

Repeating this process $n$ times, we get $m_{0}<...< m_{n-1}\in \omega$, $\tau'_{0}, ..., \tau'_{n-1}\in T_{s}$ all of them extending $p$ (even more specifically, they all come from a single split above $\tau$) such that $\varphi_{e}^{\tau_{i_{0}}}\restrict m_{j}=\varphi_{e}^{\tau_{i_{1}}}\restrict m_{j}$ for all $j\leq i_{0}, i_{1}< n$. In particular, if we find $\tau_{i}$ extending $\tau'_{i}$ such that $\varphi_{e}^{\tau_{i}}(t)\downarrow$ for $t<m_{n}+1$, we have that the tree created by $\varphi_{e}^{\tau_{i}}\restrict m_{n}$ is a $2$-tree. Notice that for all $i<n$, $\tau_{i}$ have exactly $n$ splits before it and the $n$-th split has $n$ successors. At this moment, we include $\tau_{i}$ at level $n$ and we include $\varphi_{e}^{\tau_{i}}\restrict m_{n}$ to $U_{e}$. There will be no more extension of $\tau$ in level $n$.

We now define $T_{s+1}$ to be the subtree generated by all the levels describe above. $T_{s+1}$ is an accelerating tree and for all the branches $A$ of $T_{s+1}$, $\varphi_{e}^{A}$ is a branch of $U_{e}$. This satisfy the requirement $P_{e}$.

The above argument shows that the Turing degree of $A=\bigcup_{n\in\omega} p_n$ is globally tree surviving but not $2$-surviving, so we are done.
\end{proof}

At the same time, the two notions are not too far apart in some sense. The proof above also gives the following result:

\begin{theorem}
Given $k\geq 2$, there is a $k$-globally surviving degree that is not $\ell$-globally surviving for $\ell\geq k+1$. Also, this degree is computable traceable. 
\end{theorem}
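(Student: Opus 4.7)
The plan is to lift the proof of Theorem \ref{surviving hierarchy} from $(k+1)^{<\omega}$ to $\omega^{<\omega}$. I would force with conditions $\langle p,T\rangle$ where $p\in\omega^{<\omega}$ is the current stem and $T\subseteq\omega^{<\omega}$ is a computable $(k+1)$-branching tree with $p$ as stem (every node of $T$ having $1$ or $k+1$ immediate successors, and a splitting extension above every node). There are two families of requirements: $R_e$ asserts that $A$ is not a branch of the $e$-th computable $k$-tree of $\omega^{<\omega}$, which makes $A$ itself a witness to $k$-global tree survival; and $P_e$ asserts that $\varphi_e^A$ is either partial or a branch of a computable $(k+1)$-branching tree $U_e\subseteq\omega^{<\omega}$ whose $n$-th level has at most $(k+1)^n$ elements, simultaneously witnessing the failure of $(k+1)$-global tree survival and computable traceability with uniform bound $h(n)=(k+1)^n$.

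I would handle $R_e$ by a direct pigeonhole: at a splitting node of $T$ above the current stem there are $k+1$ immediate successors, while the $e$-th $k$-tree has at most $k$ successors at any node, so some $T$-successor lies outside it, and downward-closure of trees guarantees no extension of the new stem lies in it either.

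I would handle $P_e$ following the trichotomy of Theorem \ref{surviving hierarchy}. First try to force partiality: if there is $t\in T$ extending the stem and $n\in\omega$ with $\varphi_e^{t'}(n)\uparrow$ for all $t'\supseteq t$ in $T$, restrict $T$ to above $t$. Otherwise verify the hypothesis that every $\tau\in T$ above the stem admits extensions $\tau_0,\dots,\tau_k\in T$ and an $n\in\omega$ for which the $k+1$ initial segments $\varphi_e^{\tau_i}\restrict n$ are convergent and pairwise distinct. If that hypothesis fails at some $\tau$, restricting above $\tau$ forces $\varphi_e^A$ to take at most $k$ values at each position, and we enumerate the finitely many possible functions as a computable $k$-tree (hence a $(k+1)$-tree) containing $\varphi_e^A$. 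If the hypothesis holds everywhere, iterate to build $T_{s+1}$ level-by-level: recursively on $\sigma\in(k+1)^{<\omega}$ pick split-extensions $t_{\sigma 0},\dots,t_{\sigma k}\in T$ and a height $n_\sigma$ so that the $k+1$ strings $\varphi_e^{t_{\sigma i}}\restrict n_\sigma$ are pairwise distinct, and let $U_e$ consist of these initial segments as $\sigma$ varies. The resulting $T_{s+1}$ and $U_e$ are both computable and $(k+1)$-branching, and every branch $A$ of $T_{s+1}$ has $\varphi_e^A\in[U_e]$.

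The main point to verify, and the only real obstacle I anticipate, is that moving from $(k+1)^{<\omega}$ to $\omega^{<\omega}$ preserves the combinatorics: both the $R_e$ pigeonhole and the $P_e$ search for $k+1$ pairwise distinct convergent initial segments depend only on the local branching of $T$ and on the values of $\varphi_e$, not on the alphabet labelling successors, so the arguments of Theorem \ref{surviving hierarchy} transfer without substantive change. The real $A\in\bigcap_s[T_s]$ is then of the desired Turing degree, with the uniform trace bound $(k+1)^n$ on the levels of each $U_e$ giving computable traceability.
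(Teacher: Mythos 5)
Your proof is correct, and it is, in effect, the fleshed-out argument that the paper only gestures at. The paper gives no detailed proof of this theorem: it is preceded only by the remark ``The proof above also gives the following result,'' pointing at the accelerating tree forcing of Theorem~\ref{globally surviving vs surviving}. Read literally that remark is misleading, since an accelerating-tree generic is globally tree surviving and hence $\ell$-globally (tree) surviving for \emph{every} $\ell$, which is the opposite of what is claimed; the intended meaning --- as the paper's closing line ``compare with Theorem~\ref{surviving hierarchy}'' signals --- is that the forcing-with-tracing template works once the conditions are changed from accelerating trees to $(k+1)$-branching subtrees of $\omega^{<\omega}$, exactly the change you make. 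Your $R_e$ pigeonhole and $P_e$ trichotomy are the right lift of Theorem~\ref{surviving hierarchy}, and since every branch of $T_0$ (hence of $\bigcap_s T_s$) lies in a $(k+1)$-tree while the $P_e$ requirements trace each $\varphi_e^A$ into a $(k+1)$-branching $U_e$, you get both directions of the separation together with computable traceability.

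Two small caveats, neither fatal. First, you rightly flag the one genuine $\omega^{<\omega}$ subtlety --- a computable $(k+1)$-branching subtree of $\omega^{<\omega}$ need not have a computable successor relation --- but you should say explicitly that your $T_{s+1}$, being defined as the range of a computable map $\sigma\mapsto t_\sigma$ on $(k+1)^{<\omega}$ with no splits along the connecting segments, \emph{does} have computable branching, and that this property is therefore preserved through the construction; without it the dovetailed searches for splitting nodes would not obviously terminate. Second, in the ``hypothesis fails at $\tau$'' branch, producing a genuinely \emph{computable} tree with at most $k$ branches containing every possible $\varphi_e^A$ needs the observation that the number of distinct convergent initial segments eventually stabilizes at its maximum $m\leq k$ (so that the level-$n$ sets become uniformly computable); the paper glosses this in Theorem~\ref{surviving hierarchy} as well, so you inherit rather than introduce the gap. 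Finally, the trace bound should be $(k+1)^{n+1}$ rather than $(k+1)^n$ if $\phi_e(n)$ collects the possible values of $\varphi_e^A(n)$, but this is immaterial for computable traceability.
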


So despite the difference between the two, there is a parallel between survival and global survival (compare with Theorem \ref{surviving hierarchy}). 

\section{Further questions}

We end by mentioning three general directions for further work:

\bigskip

First, although we have established a number of facts about the effective localization numbers (or rather, the highness properties corresponding to localization numbers), there are still important questions left open with regards to their interactions with better-understood highness properties. For example, we showed that computably traceable degrees could witness separations between levels of the global survival hierarchy, but the general role of computable traceability here is open. We do not know whether there are non-computably traceable degrees which are {\em not } $k$-surviving for $k\ge 2$, or whether ``$k$-surviving for every $k$" implies not computably traceable. Similarly we can ask about the role of DNC in these contexts. (We have already mentioned above questions regarding the separations {\em within } the localization hierarchies.)

Second, returning to the relationship between local and global survivability notions, it seems likely that the combinatorial arguments of Theorems 2.9 and 3.10 used to establish that the respective degrees are not surviving can be modified to prove more than was necessary for those theorems --- namely that for every $k$, every $(k+1)$-branching subtree of $(k+2)^{<\omega}$ (not just element of $(k+2)^\omega$) computable from the generic added by either of the following two forcings is contained in some computable $(k+1)$-branching subtree of $(k+2)^{<\omega}$:\begin{itemize}
\item Forcing with computable accelerating trees with explicit stems.
\item Forcing with $k$-branching subtrees of $(k+1)^{<\omega}$ which have a splitting node above every node, again with explicit stems.
\end{itemize}
Appropriately relativized, these preservation results\footnote{These are paralleled and extended, on the set-theoretic side, by Lemma 3.12 of the first author's paper \cite{ongay}.} would show that forcing with the product of the forcings above produces a real which is not $(k+1)$-surviving; meanwhile, by the previous results the real produced {\em is } both globally branch surviving and $k$-surviving. This would separate the global and local survivability notions in a strong way.

Finally, while normally effective cardinal characteristic results build on results and techniques on the set-theoretic side, there is no reason the converse cannot happen. The accelerating tree forcing introduced in the proof of Theorem \ref{globally surviving vs surviving} in particular provides an example of this: following the work of this paper, in \cite{ongay} the first author showed that forcing with the $\omega_2$-length product with countable support of accelerating tree forcing produces a model giving a partial answer to a question of Blass in \cite{blass}. Localization numbers have not been studied as much in set theory as their more famous counterparts (the domination/bounding numbers, the reaping/splitting numbers, etc.), and so represent a possible area where effective methods may yield new set-theoretic results in general; even further analysis of accelerating tree forcing alone may be useful in this regard.

%%%%%%%
%%%%%%%
%%%%%%%
%%%%%%%
%%%%%%%
%%%%%%%
%%%%%%%
%%%%%%%
%%%%%%%
%%%%%%%
%%%%%%%
%%%%%%%
%%%%%%%
%%%%%%%
%%%%%%%
%%%%%%%

%\pagebreak

\bibliographystyle{abbrv}
\bibliography{biblio}

\begin{thebibliography}{1}

\bibitem{blass}
A.~Blass.
\newblock Combinatorial cardinal characteristics of the continuum.
\newblock In {\em Handbook of set theory}, pages 395--489. Springer, 2010.

\bibitem{bbtnn}
J.~Brendle, A.~Brooke-Taylor, K.~M. Ng, and A.~Nies.
\newblock An analogy between cardinal characteristics and highness properties
  of oracles.
\newblock In {\em Proceedings of the 13th Asian Logic Conference}, pages 1--28.
  World Scientific, 2015.

\bibitem{geschkekojman}
S.~Geschke and M.~Kojman.
\newblock Convexity numbers of closed sets in $\mathbb{R}^n$.
\newblock {\em Proceedings of the American Mathematical Society}, 130, 2002.

\bibitem{newelski1993ideal}
L.~Newelski and A.~Ros{\l}anowski.
\newblock The ideal determined by the unsymmetric game.
\newblock {\em Proceedings of the American Mathematical Society},
  117(3):823--831, 1993.

\bibitem{niesbook}
A.~Nies.
\newblock {\em Computability and randomness}, volume~51 of {\em Oxford Logic
  Guides}.
\newblock Oxford University Press, 2009.

\bibitem{ongay}
I.~Ongay-Valverde.
\newblock Splitting localization and prediction numbers.
\newblock {\em submitted}.

\bibitem{rupprechtthesis}
N.~A. Rupprecht.
\newblock {\em Effective correspondents to cardinal characteristics in
  Cichon’s diagram}.
\newblock PhD thesis, The University of Michigan, 2010.

\bibitem{terwijn2001computational}
S.~A. Terwijn and D.~Zambella.
\newblock Computational randomness and lowness.
\newblock {\em The Journal of Symbolic Logic}, 66(03):1199--1205, 2001.

\end{thebibliography}

\end{document}